\newtheorem{theorem}{Theorem}
\newtheorem{lemma}[theorem]{Lemma}
\renewenvironment{proof}{\noindent{\bf Proof.}}{\hspace*{2mm}~$\square$}
\newenvironment{proofof}[1]{\noindent{\bf Proof of #1.}}{\hspace*{2mm}~$\square$}
\newcommand{\N}{\mathbb{N}}
\newcommand{\Z}{\mathbb{Z}}
\newcommand{\R}{\mathbb{R}}
\newcommand{\Lat}{\mathscr{L}}
\newcommand{\norm}[1]{|\!|#1|\!|}
\newcommand{\ind}{\mathbf{1}}
\newcommand{\ep}{\epsilon}
\newcommand{\n}{\hspace*{-5pt}}
\DeclareMathOperator{\poisson}{Poisson}
\DeclareMathOperator{\card}{card}
\begin{document}

\begin{frontmatter}
\title{Contact process for the spread of knowledge}
\runtitle{Contact process for the spread of knowledge}
\author{Nicolas Lanchier, Max Mercer and Hyunsik Yun}
\runauthor{Nicolas Lanchier, Max Mercer and Hyunsik Yun}
\address{School of Mathematical and Statistical Sciences \\ Arizona State University \\ Tempe, AZ 85287, USA. \\ nicolas.lanchier@asu.edu \\ mamerce1@asu.edu \\ hyun26@asu.edu}
\maketitle

\begin{abstract} \
 This paper is concerned with a natural variant of the contact process modeling the spread of knowledge on the integer lattice.
 Each site is characterized by its knowledge, measured by a real number ranging from~0~=~ignorant to~1~=~omniscient.
 Neighbors interact at rate~$\lambda$, which results in both neighbors attempting to teach each other a fraction~$\mu$ of their knowledge, and individuals die at rate one, which results in a new individual with no knowledge.
 Starting with a single omniscient site, our objective is to study whether the total amount of knowledge on the lattice converges to zero~(extinction) or remains bounded away from zero~(survival).
 The process dies out when~$\lambda \leq \lambda_c$ and/or~$\mu = 0$, where~$\lambda_c$ denotes the critical value of the contact process.
 In contrast, we prove that, for all~$\lambda > \lambda_c$, there is a unique phase transition in the direction of~$\mu$, and for all~$\mu > 0$, there is a unique phase transition in the direction of~$\lambda$.
 Our proof of survival relies on block constructions showing more generally convergence of the knowledge to infinity, while our proof of extinction relies on martingale techniques showing more generally an exponential decay of the knowledge.
\end{abstract}

\begin{keyword}[class=AMS]
\kwd[Primary ]{60K35, 91D25.}
\end{keyword}

\begin{keyword}
\kwd{Interacting particle systems; Contact process; Coupling; Block construction.}
\end{keyword}

\end{frontmatter}


\section{Introduction}
\label{sec:intro}
 This paper is concerned with a natural variant of Harris' contact process~\cite{harris_1974} modeling the spread of knowledge on the integer lattice.
 The contact process is the simplest invasion model based on the framework of interacting particle systems.
 The state at time~$t$ is a configuration
 $$ \eta_t : \Z^d \longrightarrow \{0, 1 \}  \quad \hbox{where} \quad \eta_t (x) = \hbox{state of site~$x$ at time~$t$}. $$
 The process can be interpreted as an epidemic model, in which case the possible states at each lattice site are~0~=~healthy and~1~=~infected, or as a population model, in which case~0~=~vacant and~1~=~occupied.
 In the context of epidemiology, infected individuals infect each of their healthy neighbors at rate~$\lambda$, and recover at rate one.
 In particular, writing~$x \sim y$ to indicate that~$x$ and~$y$ are nearest neighbors~(distance one apart), the state at site~$x$ transitions
 $$ \begin{array}{c} 0 \to 1 \ \ \hbox{at rate} \ \ \lambda \,\sum_{y \sim x} \eta_t (y) \qquad \hbox{and} \qquad 1 \to 0 \ \ \hbox{at rate} \ \ 1. \end{array} $$
 Note that the spread of the disease~(the transition~$0 \to 1$) can also be described assuming that any two neighbors~$x$ and~$y$ interact at rate~$\lambda$ and that, if one neighbor is healthy while the other neighbor is infected, the healthy neighbor becomes infected.
 Identifying the process with the set of infected sites, and starting from the configuration~$\eta_0 = \{0 \}$ that has an infected individual at the origin in an otherwise healthy population, we say that
 $$ \begin{array}{rcl}
    \hbox{the process~$\eta_t$ survives} & \hbox{when} & P (\eta_t \neq \varnothing \ \ \forall t) > 0, \vspace*{4pt} \\
    \hbox{the process~$\eta_t$ dies out} & \hbox{when} & P (\eta_t \neq \varnothing \ \ \forall t) = 0. \end{array} $$
 The main result about the contact process is the existence of a phase transition from extinction to survival.
 More precisely, there is a nondegenerate critical value
 $$ \lambda_c = \lambda_c (\Z^d) = \inf \{\lambda : P (\eta_t \neq \varnothing \ \ \forall t) > 0 \} $$
 that depends on the spatial dimension~$d$ such that the process dies out for all~$\lambda < \lambda_c$ but survives for all~$\lambda > \lambda_c$.
 Showing that the process properly rescaled in space and time dominates oriented site percolation and using a continuity argument, Bezuidenhout and Grimmett~\cite{bezuidenhout_grimmett_1990} also proved extinction at the critical value.
 See~\cite{lanchier_2024, liggett_1985, liggett_1999} for more details about the contact process. \\ \\
{\bf Model description}.
 The objective of this paper is to study a natural variant~$\xi_t$ of the contact process modeling the transmission of knowledge in a spatially structured population.
 Each lattice site is occupied by an individual who is now characterized by its knowledge, a real number ranging from~0~=~ignorant to~1~=~omniscient, so the state at time~$t$ is a configuration
 $$ \xi_t : \Z^d \longrightarrow [0, 1] \quad \hbox{where} \quad \xi_t (x) = \hbox{knowledge at site~$x$ at time~$t$}. $$
 The dynamics depends on two parameters: a rate~$\lambda$ that measures the frequency of the interactions and a fraction~$\mu$ that measures the strength of the interactions.
 More precisely, like in the basic contact process, pairs of nearest neighbors interact at rate~$\lambda$, which now results in an exchange of knowledge between the neighbors, and individuals die at rate one, which now results in a potential loss of knowledge.
 When two neighbors~$x$ and~$y$ interact, say, at time~$t$, the two neighbors attempt to teach a fraction~$\mu \in [0, 1]$ of their knowledge to the other neighbor, but a fraction~$\xi_{t-} (x)$, respectively~$\xi_{t-} (y)$, of the knowledge taught by site~$y$, respectively site~$x$, is already known by site~$x$, respectively site~$y$, so the state at sites~$x$ and~$y$ becomes
\begin{equation}
\label{eq:transitions}
\begin{array}{rcl}
\xi_t (x) & \n = \n & \xi_{t-} (x) + \mu \,\xi_{t-} (y) (1 - \xi_{t-} (x)), \vspace*{4pt} \\
\xi_t (y) & \n = \n & \xi_{t-} (y) + \mu \,\xi_{t-} (x) (1 - \xi_{t-} (y)). \end{array}
\end{equation}
 In addition, the individual at~$x$ dies at rate one, and is immediately replaced by a new individual that has the potential to learn but has no knowledge yet, so the state at~$x$ jumps to zero.
 Figure~\ref{fig:teaching} shows snapshots of the process for different values of the parameter pair~$(\lambda, \mu)$.
\begin{figure}[t]
\centering
\scalebox{0.90}{\input{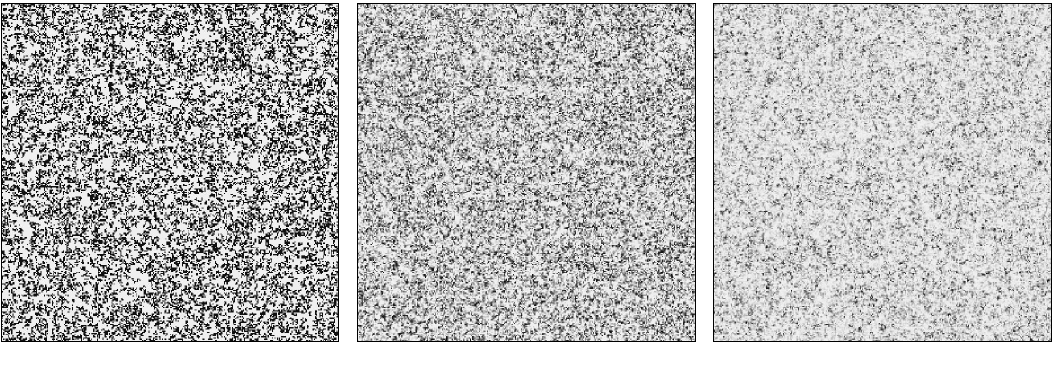_t}}
\caption{\upshape{
 Snapshots at time~1000 of the two-dimensional process with various values of the interaction rate~$\lambda$ and the fraction~$\mu$.
 When~$\mu = 1$, the process reduces to the contact process with~0~=~ignorant and~1~=~omniscient, but the spatial distribution of knowledge becomes more uniform as~$\lambda$ increases and~$\mu$ decreases.}}
\label{fig:teaching}
\end{figure}
 Note that, at least when~$\mu > 0$, an interaction~(at rate~$\lambda$) between a site with a positive knowledge and a site with no knowledge results in the latter acquiring some knowledge, while a death~(at rate one) at a site with a positive knowledge results in the site having no more knowledge.
 This shows that the process~$\eta_t (\cdot) = \ind \{\xi_t (\cdot) > 0 \}$ that keeps track of the sites with a positive knowledge reduces to the contact process with infection rate~$\lambda$.
 However, even if~$\lambda > \lambda_c$ and so~$\eta_t$ survives, if the fraction~$\mu$ is too small, the knowledge at each infected site may decay exponentially fast and the total amount of knowledge on the lattice may converge to zero.
 Also, to define the notions of survival and extinction for the process~$\xi_t$, the relevant quantity to consider is
 $$ \begin{array}{c} \Xi_t = \sum_{x \in \Z^d} \,\xi_t (x) = \hbox{total amount of knowledge on the lattice at time~$t$}. \end{array} $$
 Then, survival and extinction are defined as follows:
\begin{equation}
\label{eq:survival-extinction}
\begin{array}{rcl}
\hbox{the process~$\xi_t$ survives} & \hbox{when} & P (\limsup_{t \to \infty} \,\Xi_t > 0) > 0, \vspace*{4pt} \\
\hbox{the process~$\xi_t$ dies out} & \hbox{when} & P (\lim_{t \to \infty} \,\Xi_t = 0) = 1. \end{array}
\end{equation}
 One of the two parameters being fixed, our objective is to study the existence/uniqueness of a phase transition from extinction to survival in the direction of the other parameter. \\ \\
{\bf Main results}.
 Important properties of the process~$\xi_t$ are monotonicity and attractiveness.
 More precisely, letting~$\xi_t^i$ for~$i = 1, 2$, be the processes with parameter pair~$(\lambda_i, \mu_i)$ and initial configuration~$\xi_0^i$, the two processes can be coupled in such a way that
\begin{equation}
\label{eq:monotone-attractive}
\begin{array}{rrl}
\lambda_1 \leq \lambda_2, & \mu_1 \leq \mu_2, & \xi_0^1 (z) \leq \xi_0^2 (z) \ \ \forall \,z \in \Z^d \vspace*{4pt} \\
                          & \Longrightarrow   & \xi_t^1 (z) \leq \xi_t^2 (z) \ \ \forall \,(z, t) \in \Z^d \times \R_+. \end{array}
\end{equation}
 In the particular case where the initial configurations are equal, the previous implication means that the process is monotone with respect to~$\lambda$ and~$\mu$, while in the particular case where the parameters are equal, it means that the process is attractive.
 It follows from~\eqref{eq:monotone-attractive} that
 $$ \begin{array}{rrrcl}
    \lambda_1 \leq \lambda_2, & \mu_1 = \mu_2, & \xi_0^1 = \xi_0^2 = \ind_0 & \Longrightarrow & (\xi_t^1 \ \hbox{survives} \ \Longrightarrow \ \xi_t^2 \ \hbox{survives}). \end{array} $$
 In particular, the parameter~$\mu$ being fixed, there is at most one phase transition from extinction to survival in the direction of the parameter~$\lambda$.
 More precisely, letting
\begin{equation}
\label{eq:critical-lambda}
\begin{array}{c} \lambda_c (\mu) = \inf \{\lambda : P (\limsup_{t \to \infty} \,\Xi_t > 0 \,| \,\xi_0 = \ind_0) > 0 \} \in [0, \infty], \end{array}
\end{equation}
 there is extinction for all~$\lambda < \lambda_c (\mu)$, and survival for all~$\lambda > \lambda_c (\mu)$.
 Similarly, it follows from the coupling that, the parameter~$\lambda$ being fixed, there is at most one phase transition from extinction to survival in the direction of the parameter~$\mu$ in the sense that, letting
\begin{equation}
\label{eq:critical-mu}
\begin{array}{c} \mu_c (\lambda) = \inf \{\mu : P (\limsup_{t \to \infty} \,\Xi_t > 0 \,| \,\xi_0 = \ind_0) > 0 \} \in [0, 1], \end{array}
\end{equation}
 there is extinction for all~$\mu < \mu_c (\lambda)$, and survival for all~$\mu > \mu_c (\lambda)$.
 One parameter being fixed, the next step is to study whether there indeed exists a phase transition in the direction of the other parameter, i.e., the critical values in~\eqref{eq:critical-lambda}--\eqref{eq:critical-mu} are nondegenerate.
 Looking first at the effects of~$\lambda$, notice that the contact process~$\eta_t$ that keeps track of the sites with a positive knowledge dominates the process~$\xi_t$, so both processes die out whenever~$\lambda \leq \lambda_c$, showing that
\begin{equation}
\label{th:lambda-small}
\lambda_c (\mu) \geq \lambda_c > 0 \quad \hbox{for all} \quad \mu \in [0, 1].
\end{equation}
 Similarly, when~$\mu = 0$, the process~$\eta_t$ reduces to the contact process with recoveries but no infections, so both processes again die out.
 However, when~$\mu > 0$ and site~$x$ has a knowledge exceeding, say, one-half, enough interactions between~$x$ and its neighbors will bring the knowledge of the neighbors above one-half.
 In particular, a block construction implies that the process survives for all~$\lambda$ large but finite.
 In terms of the critical value, we deduce that
\begin{theorem}
\label{th:lambda-large}
 For all~$\mu > 0$, we have~$\lambda_c (\mu) < \infty$.
\end{theorem}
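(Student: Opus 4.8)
The plan is to establish survival for every sufficiently large $\lambda$ by a block construction that compares the set of sites whose knowledge exceeds~$1/2$ with supercritical oriented site percolation. The starting point is the elementary observation that, by~\eqref{eq:transitions}, an interaction never decreases anyone's knowledge, and a single interaction between a site~$y$ and a neighbor~$x$ with~$\xi(x) \geq 1/2$ raises~$\xi(y)$ to at least~$\phi(\xi(y))$, where~$\phi(z) = z + (\mu/2)(1 - z) = (1 - \mu/2) z + \mu/2$ is increasing on~$[0, 1]$. Iterating gives~$\phi^{(k)}(0) = 1 - (1 - \mu/2)^k \to 1$ as~$k \to \infty$, so, since~$\mu > 0$, I would fix once and for all an integer~$K = K(\mu)$ with~$\phi^{(K)}(0) > 1/2$. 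Thus~$K$ successive interactions with a partner of knowledge at least~$1/2$ push any site above~$1/2$, regardless of its current knowledge.

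Next I would introduce the graphical representation: independent rate-$\lambda$ Poisson processes~$N_e$ on the edges~$e$ of~$\Z^d$ (interaction times) and independent rate-one Poisson processes~$D_x$ on the sites~$x$ (death times). Working along the line~$\ell = \Z e_1$, fix a time scale~$\delta > 0$, set~$I_n = [n \delta, (n + 1) \delta]$, and on~$\mathcal{L} = \{(m, n) \in \Z \times \Z_+ : m + n \ \hbox{even}\}$ with edges~$(m, n) \to (m \pm 1, n + 1)$ declare~$(m, n)$ to be \emph{open} when, during~$I_n$, the processes~$D_{(m - 1) e_1}$,~$D_{m e_1}$,~$D_{(m + 1) e_1}$ have no point and the processes~$N_{\{(m - 1) e_1, m e_1\}}$ and~$N_{\{m e_1, (m + 1) e_1\}}$ each have at least~$K$ points. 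The core claim, proved by induction on~$n$ and using only on-line edges and on-line deaths (off-line interactions only raise knowledge, hence are harmless), is that if~$\xi_0 = \ind_0$ and there is an open oriented path from~$(0, 0)$ to~$(m, n)$, then~$\xi_{n \delta}(m e_1) \geq 1/2$. The base case~$n = 0$ is immediate because~$\xi_0(0) = 1$. For the inductive step, if such a path ends~$\cdots (m', n) \to (m, n + 1)$ with~$m = m' \pm 1$, then~$(0,0) \leadsto (m',n)$ gives~$\xi_{n \delta}(m' e_1) \geq 1/2$ by the inductive hypothesis, while~$(m', n)$ open keeps~$D_{m' e_1}$ and~$D_{m e_1}$ empty on~$I_n$ and forces at least~$K$ interactions on the edge~$\{m' e_1, m e_1\}$ during~$I_n$; since~$\xi(m' e_1)$ then stays at least~$1/2$ throughout~$I_n$ and~$m e_1$ does not die, these~$K$ interactions raise~$\xi(m e_1)$ to at least~$\phi^{(K)}(0) > 1/2$, which persists up to time~$(n + 1) \delta$.

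It remains to check that the open sites percolate once~$\lambda$ is large. A direct computation gives~$P((m, n) \ \hbox{open}) = e^{- 3 \delta} \bigl( P(\poisson(\lambda \delta) \geq K) \bigr)^2$, which tends to~$e^{- 3 \delta}$ as~$\lambda \to \infty$, and the field~$\{\ind\{(m, n) \ \hbox{open}\}\}$ is finite-range dependent: sites at different levels use disjoint time intervals, hence are independent, and sites at the same level are independent as soon as their spatial coordinates differ by at least four. Choosing first~$\delta$ small so that~$e^{- 3 \delta}$ is close to one and then~$\lambda$ large, the probability of being open can be made as close to one as desired, so by the standard domination theorem for finite-range dependent fields (Liggett--Schonmann--Stacey) this field dominates a supercritical oriented site percolation. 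With positive probability~$(0, 0)$ then lies in an infinite open cluster, along which the induction yields~$\xi_{n \delta}(m e_1) \geq 1/2$ for infinitely many~$(m, n)$ with~$n \to \infty$; hence~$\Xi_{n \delta} \geq 1/2$ infinitely often and~$\limsup_{t \to \infty} \Xi_t > 0$ with positive probability, which is precisely~$\lambda_c(\mu) \leq \lambda < \infty$.

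The step I expect to be the main obstacle is the precise design of the good event: it must be finite-range dependent and yet strong enough that an open renormalized site controls the death clocks at its spatial neighbors, so that knowledge newly transmitted to a neighbor is not erased before the next renormalized time step. Once that bookkeeping is arranged, the Poisson tail estimate, the choice of~$K(\mu)$, and the appeal to the oriented-percolation comparison lemma are all routine; note also that we never need to restrict to~$d = 1$, since the construction uses only the on-line portion of the graphical representation and off-line interactions can only help.
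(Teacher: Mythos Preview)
Your proposal is correct and follows essentially the same route as the paper: fix $K(\mu)$ so that $K$ interactions with a knowledge-$\geq 1/2$ neighbor push any site above $1/2$, choose a short time step so deaths are unlikely, take $\lambda$ large so at least $K$ interactions occur on each relevant edge, and compare the resulting finite-range dependent field with supercritical oriented site percolation. The only differences are cosmetic---you restrict to the line~$\Z e_1$ and invoke Liggett--Schonmann--Stacey, whereas the paper runs the block construction on the full $d$-dimensional lattice~$\Lat$ via \cite[Theorem~A.4]{durrett_1995}, which yields the slightly stronger conclusion of a positive $d$-dimensional density of sites with knowledge at least~$1/2$.
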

\noindent
 In fact, the block construction used in the proof shows that, with positive probability when starting with a single omniscient site, the process converges to an invariant measure that has a positive density of sites with knowledge exceeding one-half, which leads to a somewhat stronger conclusion than survival as defined in~\eqref{eq:survival-extinction}, namely,~$\Xi_t \to \infty$ with positive probability.
 We now look at the effects of the parameter~$\mu$, starting with the subcritical phase.
 Because sites interact on average~$2d \lambda$ times with all their neighbors between consecutive deaths, it can be proved that~$\Xi_t$ is a positive supermartingale whenever~$\mu < 1 / (2d \lambda)$.
 In particular, it follows from the martingale convergence theorem that the process dies out, which gives
\begin{theorem}
\label{th:mu-small}
 For all~$\lambda < \infty$, we have~$\mu_c (\lambda) \geq 1 / (2d \lambda) > 0$.
\end{theorem}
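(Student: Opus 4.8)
The plan is to make rigorous the heuristic indicated above: when $\mu < 1/(2d\lambda)$, the total knowledge $\Xi_t$ is a nonnegative supermartingale whose expectation decays exponentially, hence converges almost surely to~$0$.

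First I would compute the action of the Markov generator $L$ of $\xi_t$ on the (unbounded) functional $\xi \mapsto \Xi = \sum_{x} \xi(x)$. An interaction along an edge $\{x, y\}$, which occurs at rate~$\lambda$, changes $\xi(x) + \xi(y)$ by $\mu\,\xi(y)(1 - \xi(x)) + \mu\,\xi(x)(1 - \xi(y)) = \mu(\xi(x) + \xi(y)) - 2\mu\,\xi(x)\xi(y)$, while a death at~$x$, which occurs at rate one, changes $\xi(x)$ by $-\xi(x)$. Summing over all edges and all sites, using that each site belongs to~$2d$ edges so that $\sum_{\{x,y\}} (\xi(x) + \xi(y)) = 2d\,\Xi$, and discarding the nonnegative terms $2\mu\,\xi(x)\xi(y) \geq 0$, gives
\[ L\Xi \;\leq\; 2d\lambda\mu\,\Xi - \Xi \;=\; -(1 - 2d\lambda\mu)\,\Xi \;=:\; -c\,\Xi, \]
with $c = 1 - 2d\lambda\mu > 0$ by assumption.

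Second, I would turn this differential inequality into the statement that $\Xi_t$ is a genuine nonnegative supermartingale with $\tfrac{d}{dt} E[\Xi_t] \leq -c\,E[\Xi_t]$, so that $E[\Xi_t] \leq \Xi_0\, e^{-ct} = e^{-ct}$ when $\xi_0 = \ind_0$. The nontrivial point is integrability: since $\Xi$ is unbounded on the (compact) state space $[0,1]^{\Z^d}$, Dynkin's formula is not immediate. I would control this through the contact process domination already noted in the paper: starting from $\xi_0 = \ind_0$, the set of sites that have ever had positive knowledge up to time~$t$ is contained in the corresponding set for a contact process started from a single occupied site, which is dominated by a pure birth process and therefore has finite expectation; combined with $0 \leq \xi_t(x) \leq 1$, this yields $E[\Xi_t] < \infty$ and the uniform integrability needed to justify that the compensated process $\Xi_t - \Xi_0 - \int_0^t L\Xi(\xi_s)\,ds$ is a true martingale, e.g. after truncating to finite boxes and passing to the monotone limit.

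Finally, the conclusion is routine: a nonnegative supermartingale converges almost surely to a finite limit $\Xi_\infty \geq 0$, and by Fatou's lemma $E[\Xi_\infty] \leq \liminf_{t \to \infty} E[\Xi_t] = 0$, so $\Xi_\infty = 0$ almost surely. Hence $\lim_{t \to \infty} \Xi_t = 0$ almost surely, i.e., $\xi_t$ dies out whenever $\mu < 1/(2d\lambda)$, which by the definition~\eqref{eq:critical-mu} of $\mu_c(\lambda)$ gives $\mu_c(\lambda) \geq 1/(2d\lambda)$; positivity follows at once from $\lambda < \infty$. I expect the integrability/martingale justification in the second step to be the only genuine obstacle, the generator computation and the limiting argument being straightforward.
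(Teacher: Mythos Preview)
Your proposal is correct and follows essentially the same supermartingale route as the paper: compute the drift of the total knowledge, see that it is $\leq (2d\lambda\mu-1)\Xi_t$, and combine the martingale convergence theorem with the exponential decay of $E[\Xi_t]$ to force the almost sure limit to be zero. The only cosmetic difference is that the paper first passes to a linear dominating process $\xi_t^+$ (dropping the factor $(1-\xi_{t-})$ in the interaction) so as to get the drift identity $(2d\lambda\mu-1)\Xi_t^+$ exactly rather than as an inequality, whereas you work directly with $\xi_t$ and discard the nonnegative cross term $2\mu\,\xi(x)\xi(y)$; your treatment of the integrability issue is in fact more careful than the paper's.
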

\noindent
 Our proof shows again a stronger conclusion than extinction as defined in~\eqref{eq:survival-extinction} in the sense that we also have the following exponential decay:
 $$ E (\Xi_t^+) = \exp ((2d \lambda \mu - 1) \,t) \to 0 \quad \hbox{as} \ t \to \infty \quad \hbox{when} \quad \mu < 1 / (2d \lambda). $$
 Note also that taking~$\lambda > \lambda_c$ in the theorem gives an example where the set of sites with positive knowledge can expand without bound and still the total amount of knowledge on the lattice goes to zero.
 To conclude, we study the process with~$\lambda > \lambda_c$ and~$\mu$ close to but less than one.
 This case is more complicated.
\begin{figure}[t!]
\centering
\scalebox{0.44}{\input{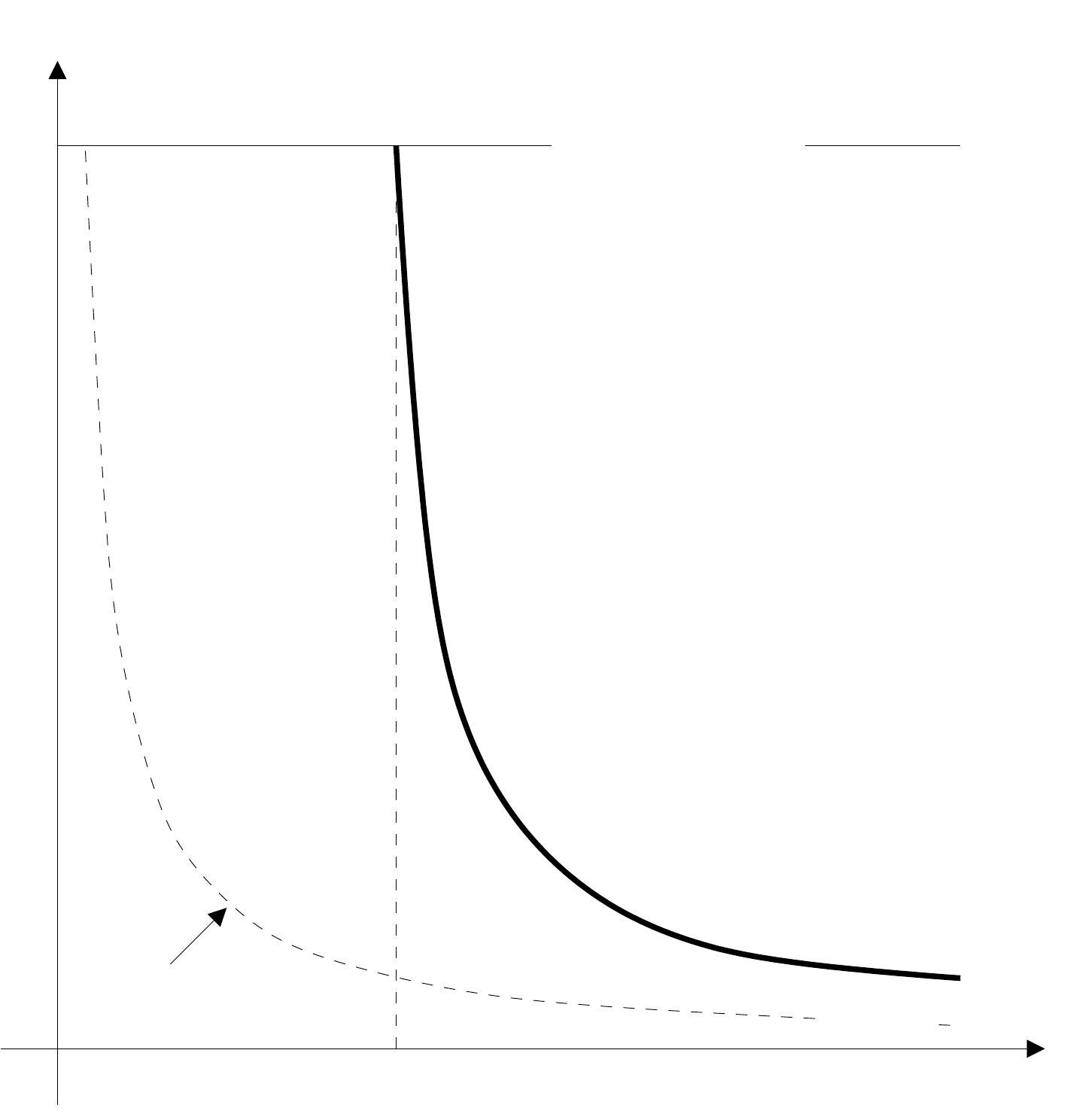_t}}
\caption{\upshape{
 Phase structure of the process~$\xi_t$.}}
\label{fig:theorems}
\end{figure}
 When~$\mu = 1$, the process reduces to the basic contact process provided it starts with only sites in state~0 or~1, so one expects survival when~$\mu$ is close enough to one.
 To prove this result, our starting point is the block construction from~\cite{durrett_neuhauser_1997} showing that the supercritical contact process~$\eta_t$ properly rescaled in space and time dominates oriented site percolation.
 The construction shows that, with probability close to one, there are many infection paths starting at the bottom of each space-time block reaching the bottom of the~$2d$ space-time blocks immediately above.
 When~$\mu < 1$, the knowledge along these paths decays exponentially.
 However, we can prove the presence, with high probability, of double interactions along these paths that compensate for the loss of knowledge.
 More precisely, for~$\mu$ close enough to one, if these paths start with a knowledge exceeding~1/2, then the knowledge along these paths remains above~2/5 until the first double interaction, then jumps to at least~3/5, and finally remains above~1/2 until reaching the next blocks, from which we deduce survival.
 In conclusion, for all~$\lambda > \lambda_c$, the process survives for all parameters~$\mu$ sufficiently close to one, which gives
\begin{theorem}
\label{th:mu-large}
 For all~$\lambda > \lambda_c$, we have~$\mu_c (\lambda) < 1$.
\end{theorem}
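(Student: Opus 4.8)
\noindent
\textbf{Plan of proof of Theorem~\ref{th:mu-large}.}
Fix $\lambda > \lambda_c$ once and for all. The idea is to run the block construction of Durrett and Neuhauser~\cite{durrett_neuhauser_1997} for the supercritical contact process and to carry, along the infection paths it produces, the extra information that the knowledge stays above~$1/2$. Recall that construction's output: for every $\ep > 0$ one can choose integers $L$, $T$ and $\ell$ so large that, after rescaling space by $L$ and time by $T$, each space--time block is \emph{good} with probability at least $1 - \ep$, where being good guarantees that from a distinguished \emph{input} site at the base of the block there are infection paths, each using at most $\ell$ interaction events, reaching the input sites of the forward neighboring blocks, and that the family of good-block events has the finite range of dependence needed for a comparison with supercritical oriented percolation. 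The first step I would take is to enlarge the blocks so that, in addition, with probability at least $1 - \ep$ each such crossing path contains a \emph{double interaction}: at some site $x$ visited by the path the infection lingers long enough, with no recovery mark on $x$ or on the next path site $y$ in between, that the interaction clock of the edge $\{x, y\}$ rings twice before the path advances past $x$. The point is that this last requirement concerns only the Poisson clocks of the graphical representation, not the parameter $\mu$, so $L$, $T$ and $\ell$ are fixed independently of $\mu$.

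Next I would carry out the knowledge bookkeeping along a crossing path of a good block. By~\eqref{eq:transitions}, an interaction adds the nonnegative quantities $\mu\,\xi_{t-}(y)(1 - \xi_{t-}(x))$ and $\mu\,\xi_{t-}(x)(1 - \xi_{t-}(y))$ to the two knowledges, so no knowledge ever decreases at an interaction; the only decreases are caused by recoveries, which reset a site to~$0$ and which an infection path by definition avoids. Tracking the knowledge at the front of the path, the only loss therefore occurs at each of the at most $\ell$ interactions the path uses to advance, where the new value is at least $\mu$ times the old one because $u + \mu v(1 - u) \geq \mu v$ for all $u, v \in [0, 1]$. Hence, if the input site of the block has knowledge exceeding $1/2$, then up to the double interaction the front knowledge remains at least $\mu^{\ell}/2$, which is at least $2/5$ once $\mu$ is close enough to~$1$. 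At the double interaction, the two clock rings take the front knowledge to at least $\mu w(2 - \mu w)$, where $w \geq 2/5$ is the knowledge at the teaching site; this equals $16/25 > 3/5$ when $\mu = 1$ and $w = 2/5$, and it is nondecreasing in $w$ on $[2/5, 1]$ for $\mu$ near~$1$, so it exceeds $3/5$ for every admissible $w$ once $\mu$ is close enough to~$1$. Finally, the remaining at most $\ell$ advancing interactions leave the front knowledge at least $3\mu^{\ell}/5 \geq 1/2$ for $\mu$ close enough to~$1$. Summarizing, there is $\mu^{\star} < 1$ such that, for all $\mu \in (\mu^{\star}, 1)$, whenever a block is good and its input site has knowledge exceeding $1/2$, the input sites of its forward neighboring blocks have knowledge at least $1/2$.

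It then remains to conclude. Starting from $\xi_0 = \ind_0$, the origin is the input site of the block containing it and has knowledge $1 > 1/2$. Comparing with supercritical oriented percolation, which survives with positive probability, and iterating the propagation statement above along the open cluster, one gets that on the survival event there is, at all sufficiently large times, at least one site with knowledge at least $1/2$ --- and in fact a positive density of such sites, so that $\limsup_{t \to \infty} \Xi_t = \infty$ with positive probability, which is stronger than survival. This proves $\mu_c(\lambda) < 1$.

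I expect the main obstacle to be the refinement of the block construction in the first step, namely arranging that with probability close to one each Durrett--Neuhauser crossing path carries a double interaction. This requires understanding the geometry of those paths well enough to see that the infection necessarily lingers at some site along the path long enough for the corresponding edge clock to ring twice before a recovery, and it requires organizing the percolation comparison around a single distinguished input site per block, so that the path used in one block starts precisely from the site certified by the previous block. Once this is in place, the knowledge estimates of the second step and the choice of $\mu^{\star}$ after $L$, $T$ and $\ell$ have been fixed are routine.
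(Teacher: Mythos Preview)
Your proposal is correct and takes essentially the same approach as the paper: the Durrett--Neuhauser block construction, an upper bound on the number of interactions along each crossing path, the presence with high probability of at least one double interaction along each such path, and the identical knowledge bookkeeping $1/2 \to 2/5 \to 3/5 \to 1/2$ with the same formula $\mu w(2-\mu w)$ at the double interaction. The paper resolves what you flag as the main obstacle by proving, via Poisson tail bounds, that each path has length between $L$ and $L^3$ (the lower bound furnishing enough independent ``overlaps'' that a double interaction occurs with probability close to one), and it works with many input sites---one per small cube of side $L^{0.1}$ in the base of each block---rather than a single distinguished one, which sidesteps the site-tracking issue you mention at the end.
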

\noindent
 Combining~\eqref{eq:monotone-attractive},~\eqref{th:lambda-small} and the three theorems shows that the process always dies out when~$\lambda \leq \lambda_c$ and/or~$\mu = 0$, but for all~$\lambda > \lambda_c$, there is a unique phase transition from extinction to survival in the direction of the parameter~$\mu$, while for all~$\mu > 0$, there is a unique phase transition in the direction of the parameter~$\lambda$.
 Using also that the process with~$\mu = 1$ essentially behaves like the contact process, we obtain the phase structure in Figure~\ref{fig:theorems}.
 The rest of the paper is devoted to the proof of monotonicity, attractive~\eqref{eq:monotone-attractive}, and the three theorems.

\section{Proof of~\eqref{eq:monotone-attractive} (monotonicity and attractiveness)}
\label{sec:monotonicity}
 The first step of our analysis is to use an idea of Harris~\cite{harris_1978} to construct the process graphically from collections of independent Poisson processes/exponential clocks.
 This so-called graphical representation will be used to couple processes with different parameters and/or initial configurations, and prove monotonicity and attractiveness.
 The general idea is to start with space~$\Z^d$, add one dimension for time by drawing a vertical half-line starting from each site, and visualize the dynamics using a random graph on~$\Z^d \times \R_+$.
 For our process, there are two types of transitions~(interactions occurring along the edges, and deaths occurring at each site) so we
\begin{itemize}
\item
 Place a rate~$\lambda$ exponential clock along each undirected edge~$\{x, y \}$.
 Each time the clock rings, say, at time~$t$, draw a double arrow~$(x, t) \longleftrightarrow (y, t)$, which we will call an interaction mark, and update the configuration of the process by letting
 $$ \begin{array}{rcl}
    \xi_t (x) & \n = \n & \xi_{t-} (x) + \mu \,\xi_{t-} (y) (1 - \xi_{t-} (x)), \vspace*{4pt} \\
    \xi_t (y) & \n = \n & \xi_{t-} (y) + \mu \,\xi_{t-} (x) (1 - \xi_{t-} (y)). \end{array} $$
\item
 Place a rate one exponential clock at each vertex~$x$.
 Each time the clock rings, say, at time~$t$, put a cross~$\times$ at~$(x, t)$, which we will call a death mark, and let~$\xi_t (x) = 0$.
\end{itemize}
 Harris~\cite{harris_1978} proved for a large class of interacting particle systems, including our model, that the process starting from any initial configuration can be constructed using the collection of exponential clocks and updating rules above.
 Note that the contact process~$\eta_t$ with parameter~$\lambda$ that keeps track of the sites with a positive knowledge can also be constructed using this graphical representation~(which results in a natural coupling between~$\eta_t$ and~$\xi_t$) as follows:
\begin{itemize}
\item
 If there is an interaction mark~$(x, t) \longleftrightarrow (y, t)$, and at least one of the two sites is infected, then the infection passes through the interaction mark:
 $$ (\eta_{t-} (x) = 1 \ \ \hbox{or} \ \ \eta_{t-} (y) = 1) \quad \Longrightarrow \quad (\eta_t (x) = 1 \ \ \hbox{and} \ \ \eta_t (y) = 1). $$
\item
 If there is a death mark~$\times$ at~$(x, t)$, then~$\eta_t (x) = 0$.
\end{itemize}
 We now use the graphical representation to prove the existence of the coupling in~\eqref{eq:monotone-attractive} where~$\xi_t^i$ has parameter pair~$(\lambda_i, \mu_i)$ and initial configuration~$\xi_0^i$.
 We construct~$\xi_t^1$ using the graphical representation described above with~$\lambda = \lambda_1$ and~$\mu = \mu_1$, while to construct the process~$\xi_t^2$, we also
\begin{itemize}
\item
 Place a rate~$(\lambda_2 - \lambda_1)$ exponential clock along each undirected edge~$\{x, y \}$.
 Each time the clock rings, say, at time~$t$, draw~$(x, t) \overset{\hbox{\tiny 2}}{\longleftrightarrow} (y, t)$.
\end{itemize}
 Because interaction marks~$\longleftrightarrow$ and~$\overset{\hbox{\tiny 2}}{\longleftrightarrow}$ appear at an overall rate~$\lambda_2$ by the superposition property for Poisson processes, the process~$\xi_t^2$  can be constructed as follows:
\begin{itemize}
\item
 If there is an interaction mark~$(x, t) \longleftrightarrow (y, t)$ or~$(x, t) \overset{\hbox{\tiny 2}}{\longleftrightarrow} (y, t)$, let
 $$ \begin{array}{rcl}
    \xi_t^2 (x) & \n = \n & \xi_{t-}^2 (x) + \mu_2 \,\xi_{t-}^2 (y) (1 - \xi_{t-}^2 (x)), \vspace*{4pt} \\
    \xi_t^2 (y) & \n = \n & \xi_{t-}^2 (y) + \mu_2 \,\xi_{t-}^2 (x) (1 - \xi_{t-}^2 (y)). \end{array} $$
\item
 If there is a death mark~$\times$ at~$(x, t)$, let~$\xi_t^2 (x) = 0$.
\end{itemize}
 To prove that the coupling~$(\xi_t^1, \xi_t^2)$ satisfies~\eqref{eq:monotone-attractive}, we assume that
\begin{equation}
\label{eq:assumption}
\xi_s^1 (z) \leq \xi_s^2 (z) \ \ \forall \,(z, s) \in \Z^d \times [0, t),
\end{equation}
 where~$t$ is the time at which one of the exponential clocks rings, and show that the inequality is preserved at time~$t$.
 There are three types of updates to consider.
\begin{enumerate}
\item
 Interaction mark~$(x, t) \longleftrightarrow (y, t)$. In this case,
 $$ \begin{array}{rcl}
    \xi_t^1 (x) & \n = \n &
    \xi_{t-}^1 (x) + \mu_1 \,\xi_{t-}^1 (y) (1 - \xi_{t-}^1 (x)) \vspace*{4pt} \\ & \n \leq \n &
    \xi_{t-}^1 (x) + \mu_2 \,\xi_{t-}^2 (y) (1 - \xi_{t-}^1 (x))  \vspace*{4pt} \\ & \n = \n &
    \xi_{t-}^1 (x) (1 - \mu_2 \,\xi_{t-}^2 (y)) + \mu_2 \,\xi_{t-}^2 (y) \vspace*{4pt} \\ & \n \leq \n &
    \xi_{t-}^2 (x) (1 - \mu_2 \,\xi_{t-}^2 (y)) + \mu_2 \,\xi_{t-}^2 (y)  \vspace*{4pt} \\ & \n = \n &
    \xi_{t-}^2 (x) + \mu_2 \,\xi_{t-}^2 (y) (1 - \xi_{t-}^2 (x)) = \xi_t^2 (x). \end{array} $$
 By symmetry, we also have~$\xi_t^2 (y) \leq \xi_t^2 (y)$, while
 $$ \xi_t^1 (z) = \xi_{t-}^1 (z) \leq \xi_{t-}^2 (z) = \xi_t^2 (z) \quad \hbox{for all} \quad z \neq x, y. $$
 This shows that~\eqref{eq:assumption} still holds at time~$t$. \vspace*{4pt}
\item
 Interaction mark~$(x, t) \overset{\hbox{\tiny 2}}{\longleftrightarrow} (y, t)$. In this case,
 $$ \xi_t^1 (x) = \xi_{t-}^1 (x) \leq \xi_{t-}^2 (x) \leq \xi_{t-}^2 (x) + \mu_2 \,\xi_{t-}^2 (y) (1 - \xi_{t-}^2 (x)) = \xi_t^2 (x). $$
 As before, the same inequality holds for site~$y$, while the processes remain unchanged at all the other sites, so the inequality~\eqref{eq:assumption} still holds at time~$t$. \vspace*{4pt}
\item
 Death mark~$\times$ at~$(x, t)$. In this case,~$\xi_t^1 (x) = 0 = \xi_t^2 (x)$, while the processes remain unchanged at all the other sites, so the inequality~\eqref{eq:assumption} still holds at time~$t$.
\end{enumerate}
 This proves the existence of a coupling satisfying~\eqref{eq:monotone-attractive}.


\section{Proof of Theorem~\ref{th:lambda-large}}
\label{sec:lambda-large}
 In this section, we prove that, even when the fraction~$\mu > 0$ is small, the process survives provided the interaction rate~$\lambda$ is sufficiently large.
 As previously mentioned, our proof implies in fact the following stronger result:
 Starting with~0 omniscient, there is a positive probability that the process converges to an invariant measure with a positive density of sites with knowledge exceeding, say, one-half.
 The proof relies on a block construction, a technique that first appeared in~\cite{bramson_durrett_1988} and is explained in detail in~\cite{durrett_1995}.
 The general idea is to compare the process properly rescaled in space and time with oriented site percolation.
 More precisely, let
\begin{equation}
\label{eq:sites}
\Lat = \{(z, n) \in \Z^d \times \N : z_1 + \cdots + z_d + n \ \hbox{is even} \},
\end{equation}
 that we turn into a directed graph~$\vec{\Lat}$ by placing an edge
\begin{equation}
\label{eq:arrows}
\begin{array}{rcl}
  (z, n) \to (z', n') & \Longleftrightarrow & |z_1 - z_1'| + \cdots + |z_d - z_d'| = 1 \ \hbox{and} \ n' = n + 1 \vspace*{4pt} \\
                      & \Longleftrightarrow & z' \sim z \ \hbox{and} \ n' = n + 1. \end{array}
\end{equation}
 Letting~$T > 0$ to be fixed later, and calling~$(z, n) \in \Lat$ a good site if
 $$ E_{z, n} = \{\xi_{nT} (z) > 1/2 \} = \{\hbox{the knowledge at~$z$ at time~$nT$ exceeds 1/2} \} $$
 occurs, the objective is to prove that, for all~$\ep > 0$, the time scale~$T$ can be chosen in such a way that the set of good sites dominates the set of wet sites in a percolation process on~$\vec{\Lat}$ in which sites are open with probability~$1 - \ep$.
 To begin with, we look at the process~$\xi_t$ restricted to a single edge~$\{x, y \}$ in the absence of deaths, and prove that, if the knowledge at~$x$ is at least~$1/2$, then the knowledge at both sites will be at least~$1/2$ after a finite number of interactions.
\begin{lemma}
\label{lem:interactions}
 Let~$\mu > 0$, and assume that~$\xi_0 (x) = 1/2$ and~$\xi_0 (y) = 0$.
 Then, there exists~$n = n (\mu) < \infty$ such that, after the two sites interact at least~$n$ times with no deaths,
 $$ \xi_t (x) \geq 1/2 \quad \hbox{and} \quad \xi_t (y) \geq 1/2. $$
\end{lemma}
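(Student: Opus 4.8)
The plan is to reduce the whole statement to a one-dimensional deterministic recursion. The key elementary observation is that an interaction never decreases the knowledge at either of the two sites involved: writing $a = \xi_{t-}(x)$ and $b = \xi_{t-}(y)$, the update~\eqref{eq:transitions} replaces $(a, b)$ by $(a + \mu b (1 - a),\, b + \mu a (1 - b))$, and since $a, b \in [0, 1]$ and $\mu \geq 0$ we have $a + \mu b (1 - a) \geq a$ and $b + \mu a (1 - b) \geq b$. Hence, in the absence of deaths along $\{x, y\}$, both $t \mapsto \xi_t(x)$ and $t \mapsto \xi_t(y)$ are nondecreasing. In particular, $\xi_t(x) \geq \xi_0(x) = 1/2$ for all $t$, so the bound on the knowledge at $x$ is automatic, and it remains only to bound the knowledge at $y$ from below.

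To this end, let $a_k$ and $b_k$ denote the knowledge at $x$ and at $y$, respectively, immediately after the $k$th interaction along $\{x, y\}$, so that $b_0 = \xi_0(y) = 0$ and, by the previous step, $a_k \geq 1/2$ for all $k \geq 0$. The $(k+1)$st interaction gives
$$ b_{k+1} = b_k + \mu \, a_k (1 - b_k) \geq b_k + \tfrac{\mu}{2}(1 - b_k) = \bigl(1 - \tfrac{\mu}{2}\bigr) b_k + \tfrac{\mu}{2}, $$
where we used $a_k \geq 1/2$ and $1 - b_k \geq 0$. Defining $c_0 = 0$ and $c_{k+1} = (1 - \mu/2)\, c_k + \mu/2$, and using that $1 - \mu/2 > 0$ for $\mu \leq 1$, a straightforward induction yields $b_k \geq c_k$ for all $k$. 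Solving the linear recursion gives $c_k = 1 - (1 - \mu/2)^k$, which increases to $1$ since $0 < 1 - \mu/2 < 1$.

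It then suffices to take $n = n(\mu)$ to be any integer with $(1 - \mu/2)^n \leq 1/2$, for instance $n = \lceil (2 \ln 2)/\mu \rceil$, which works because $-\ln(1 - \mu/2) \geq \mu/2$: for such $n$ we get $c_n \geq 1/2$, hence $b_n \geq 1/2$, and after at least $n$ interactions with no deaths the monotonicity from the first step gives $\xi_t(y) \geq b_n \geq 1/2$ and $\xi_t(x) \geq 1/2$, as claimed. I do not expect a genuine obstacle here; the only points requiring a little care are that replacing $a_k$ by its worst-case value $1/2$ is legitimate precisely because of the coordinatewise monotonicity recorded in the first step, and that the comparison recursion $(c_k)$ is itself monotone, which comes down to the harmless inequality $1 - \mu/2 > 0$.
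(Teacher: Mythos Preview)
Your proof is correct and follows essentially the same route as the paper: observe that $\xi_t(x)\ge 1/2$ because interactions never decrease knowledge, plug this lower bound into the update for $y$ to get the linear recursion $b_{k+1}\ge(1-\mu/2)b_k+\mu/2$, and solve it to obtain $b_n\ge 1-(1-\mu/2)^n$. The only cosmetic differences are your explicit comparison sequence $(c_k)$ and your choice $n=\lceil(2\ln 2)/\mu\rceil$ in place of the paper's $n\ge\log(1/2)/\log(1-\mu/2)$.
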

\begin{proof}
 Let~$x_k$ and~$y_k$ be respectively the knowledge at~$x$ and at~$y$ after~$k$ interactions, and in the absence of deaths.
 It follows from the transitions~\eqref{eq:transitions} that
 $$ x_k = x_{k - 1} + \mu y_{k - 1} (1 - x_{k - 1}) \quad \hbox{and} \quad y_k = y_{k - 1} + \mu x_{k - 1} (1 - y_{k - 1}), $$
 with~$x_0 = 1/2$ and~$y_0 = 0$.
 In particular, we have~$x_k \geq x_0 = 1/2$, therefore
 $$ y_k \geq y_{k - 1} + \frac{\mu (1 - y_{k - 1})}{2} = \frac{\mu}{2} + \bigg(1 - \frac{\mu}{2} \bigg) y_{k - 1}. $$
 Finally, using a simple induction, we deduce that
 $$ \begin{array}{rcl}
    \displaystyle y_n & \n \geq \n &
    \displaystyle \frac{\mu}{2} + \bigg(1 - \frac{\mu}{2} \bigg) y_{n - 1} \geq
    \displaystyle \sum_{k = 0}^{n - 1} \bigg(\frac{\mu}{2} \bigg) \bigg(1 - \frac{\mu}{2} \bigg)^k + \bigg(1 - \frac{\mu}{2} \bigg)^n y_0 \vspace*{4pt} \\ & \n = \n &
    \displaystyle \sum_{k = 0}^{n - 1} \bigg(\frac{\mu}{2} \bigg) \bigg(1 - \frac{\mu}{2} \bigg)^k =
    \displaystyle 1 - \bigg(1 - \frac{\mu}{2} \bigg)^n. \end{array} $$
 Because the right-hand side goes to one as~$n \to \infty$ (note more precisely that it is larger than one-half whenever~$n \geq \log (1/2) / \log (1 - \mu / 2)$), the lemma follows.
\end{proof} \\ \\
 Using the previous lemma, we can now prove that, if the knowledge at~$x$ exceeds one-half then, with probability arbitrarily close to one when the interaction rate~$\lambda$ is sufficiently large, the knowledge at each of the neighbors exceeds one-half after a fixed deterministic time.
\begin{lemma}
\label{lem:invade}
 For all~$\ep, \mu > 0$, there exist~$T > 0$ and~$\lambda_+ < \infty$ such that
 $$ P (\xi_T (y) \geq 1/2 \ \ \forall \,y \sim x \,| \,\xi_0 (x) \geq 1/2) \geq 1 - \ep \quad \hbox{for all} \quad \lambda > \lambda_+. $$
\end{lemma}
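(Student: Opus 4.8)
The plan is to choose the time scale $T>0$ small enough that, with probability close to one, none of the $2d+1$ sites consisting of $x$ and its neighbors dies before time $T$, and then to choose $\lambda$ large enough that, with probability close to one, each edge $\{x,y\}$ with $y\sim x$ carries at least $n(\mu)$ interaction marks in $(0,T]$, where $n(\mu)<\infty$ is the integer supplied by Lemma~\ref{lem:interactions}. On the intersection of these events, the deterministic estimate of Lemma~\ref{lem:interactions}, applied inside the full interacting system rather than on a single isolated edge, will force $\xi_T(y)\ge 1/2$ for every neighbor~$y$.

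What makes this last step legitimate is the monotonicity of the knowledge under the two kinds of updates: an interaction mark can only raise the knowledge at the two sites it joins, since $\mu\,\xi(\cdot)(1-\xi(\cdot))\ge 0$ in~\eqref{eq:transitions}, and the only update that can lower knowledge is a death mark, which affects a single site. Fix $y\sim x$ and introduce the events
$$A=\{\hbox{no death mark at }x\hbox{ in }(0,T]\},\qquad D_y=\{\hbox{no death mark at }y\hbox{ in }(0,T]\},$$
$$I_y=\{\hbox{at least }n(\mu)\hbox{ interaction marks on the edge }\{x,y\}\hbox{ in }(0,T]\}.$$
On $A$ the knowledge at $x$ is nondecreasing on $[0,T]$, so $\xi_s(x)\ge\xi_0(x)\ge 1/2$ throughout; hence on $A\cap D_y$ each interaction mark on $\{x,y\}$ sends the current knowledge $u$ at $y$ to $u+\mu\,\xi_{s-}(x)(1-u)\ge\tfrac\mu2+(1-\tfrac\mu2)u$, while between two such marks the knowledge at $y$ cannot decrease. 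Iterating this bound over all (at least $n(\mu)$) interaction marks on $\{x,y\}$, exactly as in the induction proving Lemma~\ref{lem:interactions}, yields $\xi_T(y)\ge 1-(1-\mu/2)^{\,n(\mu)}\ge 1/2$ on $A\cap D_y\cap I_y$, and this holds simultaneously for all $y\sim x$ on $A\cap\bigcap_{y\sim x}(D_y\cap I_y)$.

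For the probabilities, the number of death marks at a site in $(0,T]$ is $\poisson(T)$ and the number of interaction marks on an edge in $(0,T]$ is $\poisson(\lambda T)$, so $P(A^c)=P(D_y^c)=1-e^{-T}$ and $P(I_y^c)=P(\poisson(\lambda T)<n(\mu))$. A union bound over $A$ and the $2d$ pairs $(D_y,I_y)$ then gives
$$P\bigl(\xi_T(y)\ge 1/2\ \ \forall\,y\sim x\ \big|\ \xi_0(x)\ge 1/2\bigr)\ \ge\ 1-(2d+1)(1-e^{-T})-2d\,P(\poisson(\lambda T)<n(\mu)).$$
I would first fix $T>0$ so small that $(2d+1)(1-e^{-T})<\ep/2$; with $T$ now fixed, $\poisson(\lambda T)\to\infty$ in probability as $\lambda\to\infty$, so there is $\lambda_+<\infty$ with $2d\,P(\poisson(\lambda T)<n(\mu))<\ep/2$ for all $\lambda>\lambda_+$, and the right-hand side above then exceeds $1-\ep$. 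There is no genuine obstacle beyond bookkeeping: the two points to watch are that $T$ must be chosen before $\lambda_+$, and that Lemma~\ref{lem:interactions} can be invoked verbatim inside the full process only because interactions never decrease knowledge while deaths act one site at a time.
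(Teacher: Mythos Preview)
Your proof is correct and follows essentially the same strategy as the paper's: choose $T$ small so that no death mark appears in $\{x\}\cup\{y:y\sim x\}$ with probability at least $1-\ep/2$, then choose $\lambda$ large so that each edge $\{x,y\}$ carries at least $n(\mu)$ interaction marks with probability at least $1-\ep/2$, and apply the iteration from Lemma~\ref{lem:interactions}. The only cosmetic difference is that the paper first restricts to the star graph $\Lambda_x$ via attractiveness before invoking Lemma~\ref{lem:interactions}, whereas you stay in the full process and use directly that interaction marks never lower knowledge; both routes yield the same bound.
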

\begin{proof}
 Because the process is attractive, it suffices to prove the result for the process~$\xi_t^-$ restricted to the star graph with vertex set~$\Lambda_x = \{x \} \cup \{y : y \sim x \}$.
 Let~$T > 0$ and
 $$ \begin{array}{rcl}
         D_x^T & \n = \n & \hbox{number of death marks~$\times$ in~$\Lambda_x$ by time~$T$}, \vspace*{4pt} \\
      I_{xy}^T & \n = \n & \hbox{number of interaction marks~$x \longleftrightarrow y$ by time~$T$}, \end{array} $$
 for all~$y \sim x$.
 Using that~$D_x^T = \poisson ((2d + 1) T)$, we get
\begin{equation}
\label{eq:invade-1}
 P (D_x^T = 0) = e^{- (2d + 1)T} \geq 1 - \ep/2 \quad \hbox{for some} \ T = T (\ep) > 0 \ \hbox{small}.
\end{equation}
 Then, by Lemma~\ref{lem:interactions}, there exists~$n = n (\mu) < \infty$ large such that
\begin{equation}
\label{eq:invade-2}
\xi_0^- (x) \geq 1/2 \ \ \hbox{and} \ \ D_x^T = 0 \ \ \hbox{and} \ \ I_{xy}^T \geq n \quad \Longrightarrow \quad \xi_T^- (y) \geq 1/2.
\end{equation}
 In addition, because the~$I_{xy}^T$ are independent~$\poisson (\lambda T)$, it follows from a standard Chernoff bound argument that there exists~$\lambda_+ = \lambda_+ (n, T, \ep) < \infty$ such that
\begin{equation}
\label{eq:invade-3}
\begin{array}{rcl}
\displaystyle P (I_{xy}^T \geq n \ \ \forall \,y \sim x) & \n = \n &
\displaystyle (1 - P (\poisson (\lambda T) < n))^{2d} \vspace*{8pt} \\ & \n \geq \n &
\displaystyle \bigg(1 - \bigg(\frac{e \lambda T}{n} \bigg)^n e^{- \lambda T} \bigg)^{2d} \geq 1 - \ep/2
\end{array}
\end{equation}
 for all~$\lambda > \lambda_+$.
 Combining~\eqref{eq:invade-1}--\eqref{eq:invade-3}, we deduce that, for all~$\ep, \mu > 0$, there exist~$T > 0$ and~$\lambda_+ < \infty$ such that, for all interaction rates~$\lambda > \lambda_+$,
 $$ \begin{array}{l}
      P (\xi_T (y) \geq 1/2 \ \ \forall \,y \sim x \,| \,\xi_0 (x) \geq 1/2) \vspace*{4pt} \\ \hspace*{42pt} \geq
      P (\xi_T^- (y) \geq 1/2 \ \ \forall \,y \sim x \,| \,\xi_0^- (x) \geq 1/2) \vspace*{4pt} \\ \hspace*{42pt} \geq
      P (D_x^T = 0 \ \hbox{and} \ I_{xy}^T \geq n \ \ \forall \,y \sim x) \vspace*{4pt} \\ \hspace*{42pt} =
      P (D_x^T = 0) P (I_{xy}^T \geq n \ \ \forall \,y \sim x) \geq (1 - \ep/2)^2 \geq 1 - \ep. \end{array} $$
 This completes the proof of the lemma.
\end{proof} \\ \\
 Theorem~\ref{th:lambda-large} follows from Lemma~\ref{lem:invade} and~\cite[Theorem~A.4]{durrett_1995}. \\ \\
\begin{proofof}{Theorem~\ref{th:lambda-large}}
 According to Lemma~\ref{lem:invade} and its proof that, for all~$\ep, \mu > 0$, there exists a collection of good events~$\{G_{z, n} : (z, n) \in \Lat \}$ measurable with respect to the graphical representation of the process~$\xi_t$ in the space-time region
 $$ R_{z, n} = (z, nT) + (\{-1, 0, 1 \}^d \times [0, T]) $$
 such that we can choose~$T > 0$ and~$\lambda_+ < \infty$ to guarantee that
 $$ P (G_{z, n}) \geq 1 - \ep \quad \hbox{and} \quad G_{z, n} \cap E_{z, n} \subset E_{z', n'} \ \ \forall \,(z', n') \leftarrow (z, n) $$
 for all~$\lambda > \lambda_+$.
 According to~\cite[Theorem~A.4]{durrett_1995}, this implies that the set of good sites in the interacting particle system dominates stochastically the set of wet sites in an oriented site percolation process on the directed graph~$\vec{\Lat}$ with parameter~$1 - \ep$.
 In addition, because
 $$ R_{z, n} \cap R_{z', n'} = \varnothing \quad \hbox{whenever} \quad |z_1 - z_1'| + \cdots + |z_d - z_d'| + |n - n'| \geq 3d, $$
 the percolation process has a finite range of dependence, so the parameter~$\ep > 0$ can be chosen small enough to make the percolation process supercritical.
 It follows that, with positive probability when starting with~0 omniscient, the process~$\xi_t$ converges to an invariant measure that has a positive density of sites with knowledge at least one-half.
 In particular, the process survives.
\end{proofof}


\section{Proof of Theorem~\ref{th:mu-small}}
\label{sec:mu-small}
 In this section, we prove that, for all~$\lambda < \infty$ and~$\mu < 1 / (2d \lambda)$, the process dies out.
 The proof relies on martingale techniques showing also that the expected amount of knowledge on the lattice decays exponentially fast.
 Recall that extinction is obvious when~$\leq \lambda_c$ since in this case the contact process that keeps track of the sites with a positive knowledge is subcritical.
 To deal with the supercritical case, we consider the process~$\xi_t^+ : \Z^d \longrightarrow [0, \infty)$ that again has interactions at rate~$\lambda$ and deaths at rate one, but in which an interaction between~$x$ and~$y$ results in the transitions
 $$ \xi_t (x) = \xi_{t-} (x) + \mu \,\xi_{t-} (y) \quad \hbox{and} \quad \xi_t (y) = \xi_{t-} (y) + \mu \,\xi_{t-} (x). $$
 The processes~$\xi_t$ and~$\xi_t^+$ can be coupled in such a way that~$\xi_t (z) \leq \xi_t^+ (z) \ \forall \,z$ at all times provided the inequalities hold at time~0, therefore it suffices to prove extinction of the process~$\xi_t^+$ under the assumptions of the theorem.
 The key is to control the drift of the total amount of knowledge on the lattice, which is done in the next lemma.
\begin{lemma}
\label{lem:supermartingale}
 Let~$\Xi_t^+ = \sum_{x \in \Z^d} \xi_t^+ (x)$. Then,
\begin{equation}
\label{eq:supermartingale}
\begin{array}{c} \lim_{\ep \to 0} \,\ep^{-1} \,E (\Xi_{t + \ep}^+ - \Xi_t^+ \,| \,\xi_t^+) = (2d \lambda \mu - 1) \,\Xi_t^+. \end{array}
\end{equation}
\end{lemma}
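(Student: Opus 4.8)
The plan is to read \eqref{eq:supermartingale} as the statement that the functional $\xi \mapsto \Xi = \sum_x \xi(x)$ lies, after the reduction below, in the domain of the generator $\mathcal L$ of $\xi_t^+$ with $\mathcal L f (\xi) = (2d\lambda\mu - 1)\,\Xi$, and to prove it by a direct infinitesimal computation conditionally on the configuration at time~$t$. First I would note that, since $\xi_t^+$ is built from the same graphical representation as $\xi_t$, the set $\{x : \xi_t^+(x) > 0\}$ is exactly the contact process with rate~$\lambda$ started from a single site, hence almost surely finite for every~$t$; so $\Xi_t^+ < \infty$ almost surely and the conditional expectation in \eqref{eq:supermartingale} is well defined. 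By the Markov property it then suffices to fix a configuration $\xi$ with finite support $S = \{x : \xi(x) > 0 \}$, $\card(S) = N < \infty$, and compute $\lim_{\ep \to 0} \ep^{-1} E (\Xi_{t+\ep}^+ - \Xi_t^+ \,|\, \xi_t^+ = \xi)$.

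Second, over the interval $[t, t + \ep]$ only the rate-one death clocks at the $N$ sites of $S$ and the rate-$\lambda$ interaction clocks on the finitely many edges $\{x, y\}$ meeting $S$ can change $\Xi^+$, and I would decompose according to which of these clocks ring. With probability $1 - O(\ep)$ none rings and $\Xi^+$ is unchanged. With probability $\lambda \ep + o(\ep)$ exactly the clock on a given such edge $\{x, y\}$ rings, changing $\xi(x)$ to $\xi(x) + \mu\,\xi(y)$ and $\xi(y)$ to $\xi(y) + \mu\,\xi(x)$ and hence increasing $\Xi^+$ by $\mu (\xi(x) + \xi(y))$; with probability $\ep + o(\ep)$ exactly the death clock at a given $x \in S$ rings, decreasing $\Xi^+$ by $\xi(x)$. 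The event that two or more of these clocks ring has probability $O(\ep^2)$, and since each interaction at most multiplies $\Xi^+$ by $1 + \mu$ while deaths only decrease it, the conditional expectation of the increase of $\Xi^+$ on that event is $O(\ep^2)$ and therefore negligible after dividing by $\ep$. Collecting the $O(\ep)$ terms yields
$$ E (\Xi_{t+\ep}^+ - \Xi_t^+ \,|\, \xi_t^+ = \xi) = \ep \,\Big( \lambda\mu \sum_{\{x,y\} :\, x \sim y} (\xi(x) + \xi(y)) - \sum_x \xi(x) \Big) + o(\ep). $$

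Finally I would simplify the bracket via the handshake-type identity $\sum_{\{x,y\} :\, x \sim y} (\xi(x) + \xi(y)) = \sum_x (\deg x)\, \xi(x) = 2d \sum_x \xi(x)$, valid because every site of $\Z^d$ has exactly $2d$ neighbors and only the sites of $S$ contribute to either sum; the bracket equals $(2d\lambda\mu - 1)\,\Xi$, so dividing by $\ep$ and letting $\ep \to 0$ gives \eqref{eq:supermartingale}. I expect the main obstacle to be bookkeeping rather than conceptual, namely checking that the almost-sure finiteness of the support legitimates the generator computation for the \emph{unbounded} functional $\Xi$ and that multiple simultaneous clock rings contribute only $o(\ep)$ — both addressed above. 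Taking expectations in \eqref{eq:supermartingale}, together with the finiteness of $E(\Xi_t^+)$, then gives $\tfrac{d}{dt} E(\Xi_t^+) = (2d\lambda\mu - 1)\, E(\Xi_t^+)$, whence the exponential decay $E(\Xi_t^+) = e^{(2d\lambda\mu - 1)t}$ recorded after the lemma when $\mu < 1/(2d\lambda)$.
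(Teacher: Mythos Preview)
Your proposal is correct and follows essentially the same approach as the paper: both compute the infinitesimal drift of $\Xi_t^+$ by summing the contribution of each death (rate~$1$, increment $-\xi(x)$) and each interaction (rate~$\lambda$, increment $\mu(\xi(x)+\xi(y))$), then invoke the fact that every site of~$\Z^d$ has exactly $2d$ neighbors to collapse the edge sum into $2d\lambda\mu\,\Xi_t^+$. Your version is somewhat more careful about the justifications (finite support via the coupled contact process, control of the $O(\ep^2)$ multiple-ring event), whereas the paper simply writes down the rate function $r(c)$ and sums over $\Lambda_1$ and $\Lambda_2$ directly; the final sentence of your proposal about $\tfrac{d}{dt}E(\Xi_t^+)$ belongs to the proof of Theorem~\ref{th:mu-small} rather than the lemma itself.
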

\begin{proof}
 To simplify the notation, for all~$c \neq 0$, we let
 $$ \begin{array}{c} r (c) = \lim_{\ep \to 0} \,\ep^{-1} \,P (\Xi_{t + \ep}^+ - \Xi_t^+ = c \,| \,\xi_t^+) \end{array} $$
 be the rate at which~$\Xi_t^+$ increases by~$c$ or decreases by~$- c$, and
 $$ \begin{array}{rcl}
    \Lambda_1 & \n = \n & \{x \in \Z^d : \xi_t^+ (x) > 0 \}, \vspace*{4pt} \\
    \Lambda_2 & \n = \n & \{\{x, y \} \in \Z^d \times \Z^d : x \sim y \ \hbox{and} \ \xi_t^+ (x) + \xi_t^+ (y) > 0 \}. \end{array} $$
 Because individuals die independently at rate one, which results in a loss of their knowledge, while pairs of nearest neighbors interact independently at rate~$\lambda$, which results in the transfer of a fraction~$\mu$ of their knowledge to the other neighbor, we have
 $$ \begin{array}{rclcl}
                      r (- \xi_t^+ (x)) & \n = \n & 1       & \hbox{for all} & x \in \Lambda_1, \vspace*{4pt} \\
      r (\mu (\xi_t^+ (x) + \xi_t^+ (y))) & \n = \n & \lambda & \hbox{for all} & \{x, y \} \in \Lambda_2. \end{array} $$
 Summing over all~$x \in \Lambda_1$ and all~$\{x, y \} \in \Lambda_2$, we deduce that
 $$ \begin{array}{rcl}
    \lim_{\ep \to 0} \displaystyle E \bigg(\frac{\Xi_{t + \ep}^+ - \Xi_t^+}{\ep} \ \Big| \ \xi_t^+ \bigg) & \n = \n &
    \displaystyle \sum_{x \in \Lambda_1} (- \xi_t^+ (x)) + \sum_{\{x, y \} \in \Lambda_2} \lambda \mu (\xi_t^+ (x) + \xi_t^+ (y)) \vspace*{8pt} \\ & \n = \n &
    \displaystyle - \sum_{x \in \Lambda_1} \,\xi_t^+ (x) + 2d \lambda \mu \,\sum_{x \in \Lambda_1} \,\xi_t^+ (x) = (2d \lambda \mu - 1) \,\Xi_t^+. \end{array} $$
 This completes the proof.
\end{proof} \\ \\
 Theorem~\ref{th:mu-small} follows from the previous lemma. \\ \\
\begin{proofof}{Theorem~\ref{th:mu-small}}
 Let~$\mu < 1 / (2d \lambda)$.
 Then, by Lemma~\ref{lem:supermartingale}, the process~$\Xi_t^+$ is a supermartingale with respect to the natural filtration of the process~$\xi_t^+$, and
 $$ \begin{array}{c} \sup_{t \geq 0} \norm{\Xi_t^+}_1 = \sup_{t \geq 0} E (\Xi_t^+) \leq E (\Xi_0^+) = 1 \quad \hbox{when} \quad \xi_0^+ = \ind_0. \end{array} $$
 In particular, it follows from the martingale convergence theorem that~$\Xi_t^+$ converges almost surely to a random variable~$\Xi_{\infty}^+$.
 To prove that~$\Xi_{\infty}^+ = 0$, let~$\phi (t) = E (\Xi_t^+)$, and take the expected value on both sides of equation~\eqref{eq:supermartingale} to get the differential equation
 $$ \begin{array}{rcl}
    \phi' (t) & \n = \n & \lim_{\ep \to 0} \,\ep^{-1} \,(\phi (t + \ep) - \phi (t)) = \lim_{\ep \to 0} \,\ep^{-1} \,E (\Xi_{t + \ep}^+ - \Xi_t^+) \vspace*{4pt} \\
              & \n = \n & \lim_{\ep \to 0} \,\ep^{-1} \,E (E (\Xi_{t + \ep}^+ - \Xi_t^+ \,| \,\xi_t^+)) = (2d \lambda \mu - 1) \,E (\Xi_t^+) \vspace*{4pt} \\
              & \n = \n & (2d \lambda \mu - 1) \,\phi (t). \end{array} $$
 Recalling that~$\xi_0^+ = \ind_0$, we have~$\phi (0) = \Xi_0^+ = 1$, therefore
 $$ E (\Xi_t^+) = \phi (t) = \exp ((2d \lambda \mu - 1) \,t) \to 0 \quad \hbox{as} \ t \to \infty, $$
 from which it follows that~$\Xi_{\infty}^+ = 0$.
 By stochastic domination, the same holds for~$\Xi_t$, which shows that the process dies out and completes the proof of the theorem.
\end{proofof}


\section{Proof of Theorem~\ref{th:mu-large}}
\label{sec:mu-large}
 In this section, we prove that, when the interaction rate~$\lambda > \lambda_c$ even barely, the process survives if the fraction~$\mu$ is sufficiently close to one.
 Our proof again implies the following stronger result:
 Starting with~0 omniscient, there is a positive probability that the process converges to an invariant measure with a positive density of sites with a knowledge exceeding, say, one-half.
 The result is obvious when~$\mu = 1$ since in this case the process reduces to a supercritical contact process in which infected sites are omniscient.
 To prove the result when~$\mu < 1$, we use that the contact process properly rescaled in space and time dominates oriented site percolation, and then show that the domination still holds looking instead at the sites with a knowledge exceeding one-half when~$\mu$ is close to one.
 That the contact process dominates oriented site percolation was first proved by Bezuidenhout and Grimmett~\cite{bezuidenhout_grimmett_1990}.
 However, to show our result, it will be more convenient to use the block construction from the more recent work of Durrett and Neuhauser~\cite[Section~3]{durrett_neuhauser_1997}.
 Their block construction was designed to study the multitype contact process, but ignoring one type of particle, it shows the following result for the supercritical contact process~$\eta_t$ that keeps track of the sites that carry a positive fraction of the knowledge.
 Recalling the graph~\eqref{eq:sites}--\eqref{eq:arrows}, and letting~$L$ be a large integer and~$T = L^2$, for each site~$(z, n) \in \Lat$, define the space-time blocks
 $$ \begin{array}{rcl}
      B_{z, n} & \n = \n & (Lz, nT) + ([- L, L]^d \times \{0 \}), \vspace*{4pt} \\
      R_{z, n} & \n = \n & (Lz, nT) + ([- 3L, 3L]^d \times [0, T]). \end{array} $$
 Partition the flat region~$B_{z, n}$ into small cubes of size~$L^{0.1} \times \cdots \times L^{0.1}$, and call~$(z, n) \in \Lat$ a good site if the following event occurs:
 $$ \begin{array}{l}
      E_{z, n} = \{\hbox{each of the small cubes in~$B_{z, n}$ contains} \vspace*{2pt} \\ \hspace*{70pt}
                   \hbox{at least one site with a positive knowledge} \}. \end{array} $$
 Then, for all~$\lambda > \lambda_c$ and all~$\ep > 0$, there exists a collection of good events~$\{G_{z, n} : (z, n) \in \Lat \}$ that only depend on the graphical representation in the block~$R_{z, n}$ such that
\begin{equation}
\label{eq:good-1}
  P (G_{z, n}) \geq 1 - \ep/3 \quad \hbox{and} \quad G_{z, n} \cap E_{z, n} \subset E_{z', n'} \ \ \forall \,(z', n') \leftarrow (z, n)
\end{equation}
 for all~$L$ sufficiently large.
 Our main problem is that, at each interaction, only a fraction of the knowledge is transferred, so survival of the sites with a positive knowledge does not imply that the process~$\xi_t$ survives.
 To prove survival, the objective is to improve/redesign the events~$G_{z, n}$ in order to extend the block construction to the new events
\begin{equation}
\label{eq:one-half}
\begin{array}{l}
  E_{z, n}' = \{\hbox{each of the small cubes in~$B_{z, n}$ contains} \vspace*{2pt} \\ \hspace*{70pt}
                \hbox{at least one site with a knowledge exceeding one-half} \}. \end{array}
\end{equation}
 Having a realization of the graphical representation of~$\xi_t$, we say that there is a path of length~$K$ from point~$(x, s)$ to point~$(y, t)$, and write~$(x, s) \uparrow (y, t)$, if there are
\begin{equation}
\label{eq:path}
  x = x_0, x_1, \ldots, x_K = y \quad \hbox{and} \quad s = s_0 < s_1 < \cdots < s_{K + 1} = t
\end{equation}
 such that the following two conditions hold:
\begin{itemize}
\item there is an interaction mark~$(x_{i - 1}, s_i) \longleftrightarrow (x_i, s_i)$ for~$i = 1, 2, \ldots, K$, \vspace*{4pt}
\item the segments~$\{x_{i - 1} \} \times (s_{i - 1}, s_i)$ are void of death marks~$\times$ for~$i = 1, 2, \ldots, K - 1$.
\end{itemize}
 Note that the knowledge spreads through the paths in the sense that
 $$ \xi_s (x) > 0 \quad \hbox{and} \quad (x, s) \uparrow (y, t) \quad \Longrightarrow \quad \xi_t (y) > 0. $$
 The event~$G_{z, n}$ is defined in~\cite{durrett_neuhauser_1997} as the event that, if~$(z, n)$ is good,
\begin{equation}
\label{eq:good}
\begin{array}{l}
\hbox{there is a collection of paths~$\{(x_j, nT) \uparrow (y_j, (n + 1) T) : j = 1, 2, \ldots, M \}$} \vspace*{2pt} \\
\hbox{starting from sites~$x_j \in B_{z, n}$ that are infected and reaching each of the small} \vspace*{2pt} \\
\hbox{cubes in each of the nearby regions~$B_{z', n'} \ \forall \,(z', n') \leftarrow (z, n)$}.
\end{array}
\end{equation}
 To prove that enough knowledge is transferred through these paths, we need to
\begin{itemize}
\item control their length, which measures the loss of knowledge at each jump due to interactions between different pairs of neighbors along the path, \vspace*{4pt}
\item prove the existence of multiple interactions between the same pairs of neighbors to compensate for the loss of knowledge along other parts of the path.
\end{itemize}
 The length of the paths in~\eqref{eq:good} should scale like~$T = L^2$, and so must be between~$L$ and~$L^3$ with probability close to one when~$L$ is large.
 More precisely, letting~$K_j$ denote the length of the~$j$th path in the good event~\eqref{eq:good}, and letting
\begin{equation}
\label{eq:good-2}
  H_{z, n}^- = \{K_j > L \ \ \forall \,j \leq M \} \quad \hbox{and} \quad H_{z, n}^+ = \{K_j < L^3 \ \ \forall \,j \leq M \},
\end{equation}
 we have the following result.
\begin{lemma}
\label{lem:length}
 For all~$\ep > 0$, there exists~$L_+ = L_+ (\ep) < \infty$ such that
 $$ P (H_{z, n}^-) \geq 1 - \ep/6 \quad \hbox{and} \quad P (H_{z, n}^+) \geq 1 - \ep/3 \quad \hbox{for all} \quad L > L_+. $$
\end{lemma}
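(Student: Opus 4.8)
The plan is to extract the two bounds from the geometry of the block construction in~\cite{durrett_neuhauser_1997}. The key observation is that a path $(x_j,nT)\uparrow(y_j,(n+1)T)$ lives in the space-time region $R_{z,n}$, whose temporal extent is exactly $T=L^2$, and each step of the path corresponds to an interaction mark, i.e.\ a ring of a rate-$\lambda$ Poisson clock on one of the $d$ edges incident to the current site. So the length $K_j$ of such a path is at most the total number of interaction marks along the (at most) $2d$ edges touching the path's trajectory during the time interval $[nT,(n+1)T]$, and this in turn is dominated by a sum of finitely many independent Poisson$(\lambda T)$ random variables; more crudely, the number of interaction marks in the whole region $R_{z,n}$ during $[0,T]$ is Poisson with parameter of order $\lambda L^d T = \lambda L^{d+2}$. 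Since there are $M$ paths and $M$ is at most the number of small cubes times a constant, $M = O(L^{d(1-0.1)}) = O(L^{0.9d})$, a union bound over $j\le M$ will be affordable against the exponentially small Poisson deviation probabilities.

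For the upper bound $H_{z,n}^+=\{K_j<L^3\ \forall j\le M\}$, I would argue: each $K_j$ is stochastically dominated by $N$, the number of interaction marks in $R_{z,n}$ in time $[0,T]$, which is $\poisson(c\lambda L^{d+2})$ for an explicit constant $c$ depending on $d$. Then
$$
P(H_{z,n}^+{}^c)\le M\,P(N\ge L^3)\le C L^{0.9d}\,P\big(\poisson(c\lambda L^{d+2})\ge L^3\big),
$$
and for $L$ large, $L^3$ dominates the mean $c\lambda L^{d+2}$ (here we only need $d=1$, say, or more generally this works once $3>d+2$, i.e.\ $d=0$ — so actually for $d\ge1$ one must instead compare against a sharper quantity: the length of a path is not the number of marks in the whole region but the number of marks along the $2d$ specific edges it uses at each time, which is $\poisson(2d\lambda T)=\poisson(2d\lambda L^2)$, and since $L^3\gg 2d\lambda L^2$ the Chernoff bound gives $P(\poisson(2d\lambda L^2)\ge L^3)\le \exp(-cL^3)$, which crushes $M\le CL^{0.9d}$). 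So the upper-bound claim follows from a Chernoff estimate on a single Poisson random variable with mean $O(L^2)$ against the threshold $L^3$, followed by a union bound.

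For the lower bound $H_{z,n}^-=\{K_j>L\ \forall j\le M\}$, the point is spatial: a path from $B_{z,n}$ to a neighboring region $B_{z',n'}$ must traverse a spatial distance of order $L$ (the blocks are rescaled by $L$ and separated accordingly), and each interaction mark moves the path by exactly one lattice unit, so $K_j\ge \|y_j-x_j\|_1 \ge cL$ deterministically for a suitable constant $c>0$ built into the construction — in fact the paths in~\eqref{eq:good} are required to reach cubes in $B_{z',n'}$ with $z'\sim z$, so $\|Lz'-Lz\|_1=L$ and hence $K_j\ge L$ automatically. If the construction is set up so this holds surely, then $P(H_{z,n}^-)=1$ and the bound $1-\ep/6$ is trivial; otherwise a short argument shows that the probability of a path shorter than $L$ reaching the target is exponentially small in $L$, and a union bound over $j\le M$ finishes it.

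The main obstacle is bookkeeping rather than conceptual: one must carefully identify, from the construction in~\cite[Section~3]{durrett_neuhauser_1997}, the precise dominating Poisson random variable for $K_j$ (mean $\Theta(L^2)$, since the relevant time window is $T=L^2$ and the relevant edges number $2d$) and confirm that the number $M$ of paths grows only polynomially in $L$ — both facts are implicit in the cited construction — after which the two inequalities are immediate consequences of the Chernoff bound for the Poisson distribution together with a union bound, and the choice of $L_+(\ep)$ is explicit.
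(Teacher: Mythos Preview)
Your treatment of $H_{z,n}^+$ is fine and, after your self-correction, matches the paper: each path's length is stochastically dominated by a $\poisson(2d\lambda T)=\poisson(2d\lambda L^2)$ variable (interaction marks touching the current site), a Chernoff bound against the threshold $L^3$ gives something like $e^{-cL^2}$, and a union bound over the polynomially many paths finishes it.

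The gap is in your argument for $H_{z,n}^-$. Your primary idea is spatial: a path from $B_{z,n}$ to $B_{z',n'}$ with $z'\sim z$ must cover $\ell_1$-distance at least $L$, so $K_j\ge L$ deterministically. But this is false as stated, because the flat regions $B_{z,n}=(Lz,nT)+[-L,L]^d\times\{0\}$ and $B_{z',n'}=(Lz',(n+1)T)+[-L,L]^d\times\{0\}$ for $z'\sim z$ \emph{overlap spatially}: in one dimension, $B_{z,n}$ spans $[Lz-L,Lz+L]$ and $B_{z+1,n+1}$ spans $[Lz,Lz+2L]$, so a path can start and end at the same lattice point and have zero net displacement. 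The displacement lower bound therefore gives nothing for the paths landing in the near half of the target block, and your fallback (``otherwise a short argument shows\ldots'') does not name the mechanism that actually forces the length up.

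That mechanism is temporal, not spatial, and it is what the paper uses: any path in~\eqref{eq:good} spans the full time window $[nT,(n+1)T]$ of length $T=L^2$, and on each segment $\{x_{i-1}\}\times(s_{i-1},s_i)$ it must avoid the rate-one death marks at $x_{i-1}$. Following the path in time, the death clock at the currently occupied site ticks at rate one, so the number of jumps $K_j$ stochastically dominates $X^-=\poisson(T)=\poisson(L^2)$. A Chernoff bound then gives $P(K_j\le L)\le (eT/L)^L e^{-T}\le e^{-L^2}$, and the union bound over $M\le 2d(2L+1)^d$ paths yields $P(H_{z,n}^-)\ge 1-\ep/6$ for $L$ large. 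Once you replace the spatial-displacement idea by this survival-against-death-marks observation, the proof goes through exactly as you outlined for $H_{z,n}^+$.
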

\begin{proof}
 Because paths avoid the death marks, and the death marks at a given site appear at rate one, the length of the paths in~\eqref{eq:good} dominates~$X^- = \poisson (T)$.
 Now, since~$e T > L$, it follows from a standard Chernoff bound argument that
\begin{equation}
\label{eq:length-1}
  P (K_j \leq L) \leq P (X^- \leq L) \leq \bigg(\frac{eT}{L} \bigg)^L e^{- T} \leq e^{- L^2}
\end{equation}
 for all~$j \leq M$.
 Similarly, because paths jump at interaction marks, and interaction marks involving a given site appear at rate~$2d \lambda$, the length is dominated by~$X^+ = \poisson (2d \lambda T)$.
 Since in addition~$2e d \lambda T < L^3$ for all~$L$ large, it follows that
\begin{equation}
\label{eq:length-2}
  P (K_j \geq L^3) \leq P (X^+ \geq L^3) \leq \bigg(\frac{2e d \lambda T}{L^3} \bigg)^{L^3} e^{- 2d \lambda T} \leq e^{- 2d \lambda L^2}
\end{equation}
 for all~$j \leq M$.
 Note also that the number~$M$ of paths in~\eqref{eq:good} is bounded by
\begin{equation}
\label{eq:length-3}
\card \{B_{z', n'} : (z', n') \leftarrow (z, n) \} = 2d \card (B_{z, n}) = 2d (2L + 1)^d.
\end{equation}
 Combining~\eqref{eq:length-1}--\eqref{eq:length-3}, we deduce that
 $$ \begin{array}{rcl}
      P (H_{z, n}^-) & \n \geq \n & 1 - M P (X^- \leq L) \geq 1 - 2d (2L + 1)^d \,e^{- L^2}, \vspace*{4pt} \\
      P (H_{z, n}^+) & \n \geq \n & 1 - M P (X^+ \geq L^3) \geq 1 - 2d (2L + 1)^d \,e^{- 2d \lambda L^2}. \end{array} $$
 Because the right-hand sides go to one as~$L \to \infty$, the lemma follows.
\end{proof} \\ \\
 The next step is to lower bound the~\emph{overlap} of each of the paths in~\eqref{eq:good}, from which we can deduce that, with probability close to one when~$L$ is large, there is at least one~\emph{double interaction} along each of these paths.
 To define precisely the two terms in italic in the previous sentence, consider a realization of the graphical representation of the process and a path of length~$K$ characterized by the sites and times in~\eqref{eq:path}.
 For all~$i = 1, 2, \ldots, K$, let
 $$ \begin{array}{rcl}
    \sigma_i & \n = \n & \hbox{time~$s_{i - 1}$ or time of the last death mark at site~$x_i$ before time~$s_i$,} \\ &&
                         \hbox{whichever happens last}, \vspace*{4pt} \\
      \tau_i & \n = \n & \hbox{time~$s_{i + 1}$ or time of the first death mark at site~$x_{i - 1}$ after time~$s_i$,} \\ &&
                         \hbox{whichever happens first}, \end{array} $$
 as shown in Figure~\ref{fig:overlap}.
\begin{figure}[t!]
\centering
\scalebox{0.44}{\input{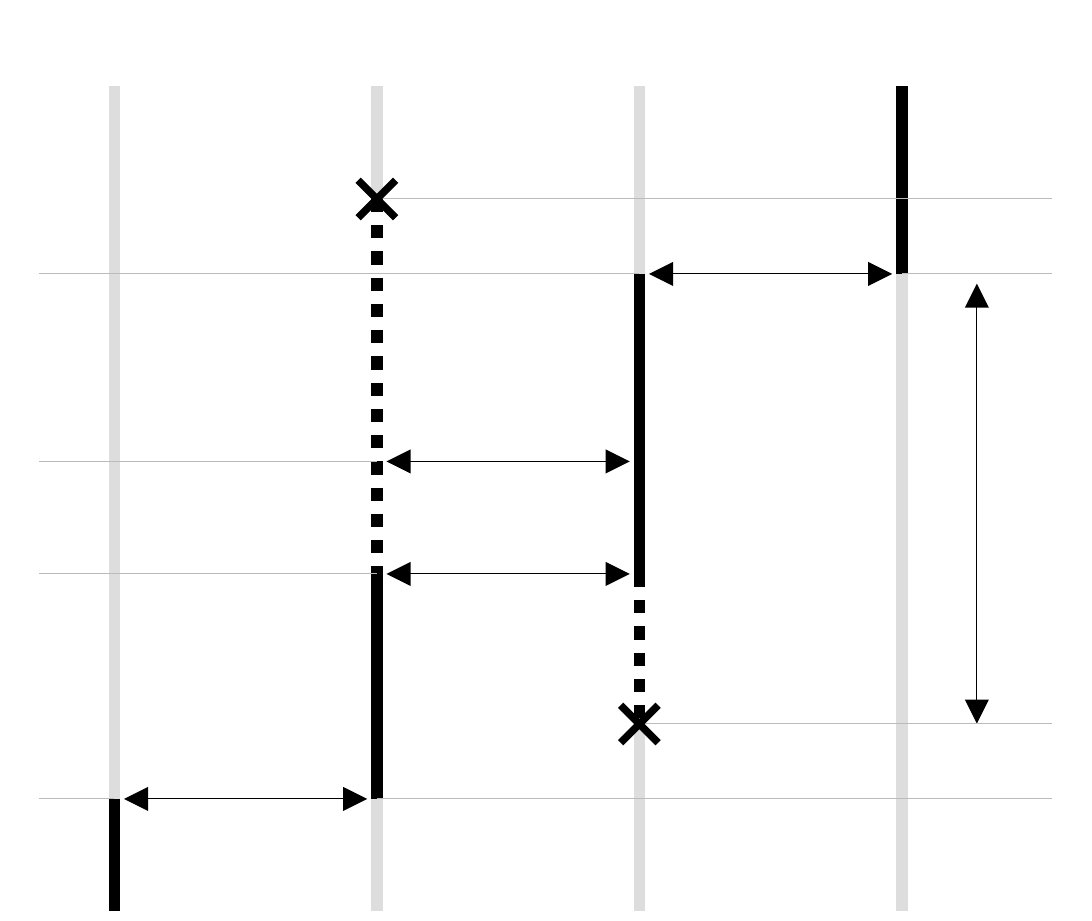_t}}
\caption{\upshape{
 Picture of a path~(in black), of its~$i$th overlap, and of a double interaction~(at time~$t_i$).
 The dashed line at site~$x_{i - 1}$ moves upward starting at time~$s_i$ until the first death mark, and~$\tau_i = s_{i + 1}$ because the death mark appears after time~$s_{i + 1}$ moving upward.
 The dashed line at site~$x_i$ moves downward starting at time~$s_i$ until the first death mark, and~$\sigma_i$ is equal to the time of the death mark because it appears before time~$s_{i - 1}$ moving downward.}}
\label{fig:overlap}
\end{figure}
 We call the time interval~$(\sigma_i, \tau_i)$ the time window of the~$i$th overlap in the path, and define the overlap of the path as the sum
 $$ \hbox{overlap of the path} = (\tau_1 - \sigma_1) + (\tau_2 - \sigma_2) + \cdots + (\tau_K - \sigma_K). $$
 Note that, during the time window of the~$i$th overlap, the two individuals who interact at time~$s_i$ are simultaneously alive, the individual at~$x_{i - 1}$ already interacted with the previous site, and the individual at~$x_i$ did not interact yet with the next site, so the two individuals can increase the knowledge carried by the path by interacting multiple times.
 Also, we say that there is a double interaction~(during the~$i$th overlap) if there is an interaction mark
 $$ (x_{i - 1}, t_i) \longleftrightarrow (x_i, t_i) \quad \hbox{for some} \quad t_i \in (\sigma_i, \tau_i) \ \hbox{with} \ t_i \neq s_i, $$
 as shown in Figure~\ref{fig:overlap}, and define the event
\begin{equation}
\label{eq:good-3}
\begin{array}{l}
  H_{z, n} = \{\hbox{there is at least one double} \vspace*{2pt} \\ \hspace*{60pt}
             \hbox{interaction along each of the paths in~\eqref{eq:good}} \}. \end{array}
\end{equation}
 The next lemma shows that~$H_{z, n}$ occurs with probability close to one.
\begin{lemma}
\label{lem:double-interaction}
 For all~$\ep > 0$, there exists~$L_+ = L_+ (\ep) < \infty$ such that
 $$ P (H_{z, n}) \geq 1 - \ep/3 \quad \hbox{for all} \quad L > L_+. $$
\end{lemma}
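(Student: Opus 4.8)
The plan is to work conditionally on the good event $G_{z, n}$ (and, for convenience, on the events $H_{z, n}^-$ and $H_{z, n}^+$ of Lemma~\ref{lem:length}), and then, conditionally on the collection of paths appearing in~\eqref{eq:good}, to bound the probability that a \emph{given} path carries no double interaction by a quantity super-polynomially small in $L$; a union bound over the at most $2d (2L + 1)^d$ paths, by~\eqref{eq:length-3}, then finishes the proof, since $P (H_{z, n}) = 1 - P (G_{z, n}, \ \hbox{some path carries no double interaction})$. Fix one path, with sites and times as in~\eqref{eq:path}, and write $h_i = s_{i + 1} - s_i$ for the holding time of the path at site $x_i$, so that $h_0 + h_1 + \cdots + h_K = s_{K + 1} - s_0 = (n + 1) T - n T = T = L^2$. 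For each step $i = 1, \ldots, K$ it suffices to find a double interaction in the \emph{forward part} $(s_i, \tau_i)$ of the $i$th overlap: an interaction mark $(x_{i - 1}, t_i) \longleftrightarrow (x_i, t_i)$ with $t_i \in (s_i, s_{i + 1})$ and no death mark at $x_{i - 1}$ in $(s_i, t_i)$ satisfies $t_i < \tau_i$ and $t_i > s_i \geq \sigma_i$, so $t_i \in (\sigma_i, \tau_i)$ with $t_i \neq s_i$, as required.

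The crux is the following decoupling statement: \emph{conditionally on the paths}, for each $i$ the interaction marks on the edge $\{x_{i - 1}, x_i \}$ at times in $(s_i, s_{i + 1})$ and the death marks at $x_{i - 1}$ at times in $(s_i, s_{i + 1})$ are still fresh, mutually independent Poisson processes of rates $\lambda$ and $1$. To obtain this I would construct a canonical version of the paths in~\eqref{eq:good} so that they are non-backtracking, the step out of a site being chosen using only the interaction marks on the $2d - 1$ edges other than the one just traversed; then the marks on the just-traversed edge, and the death marks at the endpoint just left, are never consulted by the construction, and since a non-backtracking path crosses a given edge, and visits a given site, along pairwise disjoint time windows, the relevant pieces of the graphical representation are disjoint from what the construction reads and across the index $i$. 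It follows that, for our fixed path,
$$ P (\hbox{path carries no double interaction} \mid \hbox{paths}) \leq \prod_{i = 1}^K (1 - p_i), \qquad p_i = \frac{\lambda}{\lambda + 1} \bigl(1 - e^{- (\lambda + 1) h_i} \bigr), $$
where $p_i$ is the probability that the first fresh interaction mark on $\{x_{i - 1}, x_i \}$ after $s_i$ appears before $\min (s_{i + 1}, s_i + \mathcal{E})$, with $\mathcal{E}$ an independent rate one exponential (the waiting time to the first fresh death at $x_{i - 1}$); here one uses that, for competing exponentials, ``which clock rings first'' is independent of ``when the first ring occurs''. The conditional independence across $i$ of the events defining the $p_i$ is what turns the bound into a product.

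It remains to check $\sum_{i = 1}^K p_i \to \infty$ fast. I would first add to the conditioning the event that every holding time $h_i$ (for $0 \leq i \leq K$) along every path is at most $L^{0.5}$, which holds with probability tending to one by a Chernoff estimate in the spirit of~\eqref{eq:length-1}--\eqref{eq:length-2}: a holding time exceeding $L^{0.5}$ forces a death-free interval of length $L^{0.5}$ at one of the at most $(6 L + 1)^d$ sites of $R_{z, n}$, an event of probability at most $(6 L + 1)^d (1 + L^2) e^{- L^{0.5}}$. On this event, using $1 - e^{- x} \geq \tfrac12 \min (x, 1)$ and then $\min ((\lambda + 1) h_i, 1) \geq h_i / L^{0.5}$ (valid once $L$ is large since each $h_i \leq L^{0.5}$), together with $\sum_{i = 1}^K h_i = T - h_0 \geq L^2 / 2$, one gets $\sum_{i = 1}^K p_i \geq c \, L^{3/2}$ for a constant $c = c (\lambda) > 0$. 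Hence $P (\hbox{a given path carries no double interaction} \mid \hbox{paths}) \leq e^{- c L^{3/2}}$, and altogether
$$ P (H_{z, n}^c) \leq (6 L + 1)^d (1 + L^2) e^{- L^{0.5}} + 2 d (2 L + 1)^d e^{- c L^{3/2}} \leq \ep / 3 $$
for $L$ large enough.

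The main obstacle is exactly the circularity between the paths and the marks that witness the double interactions: the paths in~\eqref{eq:good} are intricate functions of the graphical representation, so one may not simply treat the marks encountered along them as independent Poisson input. Handling this correctly is the content of the non-backtracking construction above --- guaranteeing that the marks used to build a well-defined version of the paths are disjoint from the marks used to create the double interactions --- together with the routine but necessary check that the Durrett--Neuhauser construction of~\eqref{eq:good} can be taken to produce non-backtracking paths without spoiling~\eqref{eq:good-1}. A secondary point, dealt with by the Chernoff estimate, is excluding anomalously long holding times, which is what guarantees that there are enough sizeable overlaps with which to work.
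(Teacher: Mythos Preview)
Your argument is correct in outline but takes a considerably more elaborate route than the paper's. The paper conditions only on $H_{z,n}^-$, then for each step $i$ of a path observes that after the jump $x_{i-1}\to x_i$ at time $s_i$ the first subsequent mark among \{death at $x_{i-1}$, interaction on any of the $2d$ edges incident to $x_i$\} is an $\{x_{i-1},x_i\}$--interaction with probability $\lambda/(2d\lambda+1)$, and when this happens $A_{i,j}$ occurs; a product over the $\geq L$ steps and a union over the $\leq 2d(2L+1)^d$ paths give $P(H_{z,n}^c)\leq 2d(2L+1)^d\bigl(1-\lambda/(2d\lambda+1)\bigr)^L+\ep/6$. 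There is no holding-time analysis, no use of $H_{z,n}^+$, and no explicit decoupling construction. You are right that the independence across $i$ in that product is not automatic once one conditions on the realized paths---the paper does not address this and is implicitly reading the argument as a forward exploration in which the ``type'' of the first relevant mark after each $s_i$ is a fresh categorical draw by the strong Markov property. Your non-backtracking device is one honest way to force a separation between the randomness building the paths and the randomness witnessing the double interactions; its price is the extra verification (which you flag but do not carry out) that the Durrett--Neuhauser paths can be taken non-backtracking without spoiling~\eqref{eq:good-1}, plus the holding-time detour, both of which the paper's uniform per-step bound sidesteps entirely.
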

\begin{proof}
 For~$j = 1, 2, \ldots, M$, and~$i = 1, 2, \ldots, K_j$, let
 $$ \begin{array}{l}
      A_{i, j} = \{\hbox{there is a double interaction} \vspace*{2pt} \\ \hspace*{60pt}
                 \hbox{during the~$i$th overlap of the~$j$th path in~\eqref{eq:good}} \}. \end{array} $$
 Having two neighbors~$x \sim y$, because interaction marks~$x \longleftrightarrow y$ occur at rate~$\lambda$, while death marks at~$x$ and interaction marks involving~$y$ occur altogether at rate~$2d \lambda + 1$, it follows from the superposition property for Poisson processes that
 $$ P (A_{i, j}) \geq \lambda / (2d \lambda + 1). $$
 In particular, conditioning on~$H_{z, n}^-$ and using Lemma~\ref{lem:length}, we get
 $$ \begin{array}{rcl}
      P (H_{z, n}^c) & \n \leq \n &
      P (H_{z, n}^c \,| \, H_{z, n}^-) + (1 - P (H_{z, n}^-)) \vspace*{4pt} \\ & \n = \n &
      P (\exists \,j \leq M : A_{i, j}^c \ \forall \,i \leq L \,| \, H_{z, n}^-) + \ep/6 \vspace*{4pt} \\ & \n \leq \n &
      M (1 - \lambda / (2d \lambda + 1))^L + \ep/6 \vspace*{4pt} \\ & \n \leq \n &
      2d (2L + 1)^d \,(1 - \lambda / (2d \lambda + 1))^L + \ep/6. \end{array} $$
 This can be made~$\leq \ep/3$ by choosing~$L$ large so the lemma follows.
\end{proof} \\ \\
 Using the block construction~\cite{durrett_neuhauser_1997} and Lemmas~\ref{lem:length} and~\ref{lem:double-interaction}, we can now prove the theorem. \\ \\
\begin{proofof}{Theorem~\ref{th:mu-large}}
 The idea is to design a collection of good events~$\{G_{z, n}' : (z, n) \in \Lat \}$ that only depend on the graphical representation in~$R_{z, n}$ such that
\begin{equation}
\label{eq:good-4}
  P (G_{z, n}') \geq 1 - \ep \quad \hbox{and} \quad G_{z, n}' \cap E_{z, n}' \subset E_{z', n'}' \ \ \forall \,(z', n') \leftarrow (z, n)
\end{equation}
 for all~$L$ large and~$\mu < 1$ close enough to one, where the events~$E_{z, n}'$ are defined in~\eqref{eq:one-half}.
 By Lemmas~\ref{lem:length} and~\ref{lem:double-interaction}, the two statements in~\eqref{eq:good-4} hold for the good events
 $$ G_{z, n}' = G_{z, n} \cap H_{z, n}^+ \cap H_{z, n} \quad \hbox{for all} \quad (z, n) \in \Lat. $$
 Indeed, according to~\eqref{eq:good-1} and the two lemmas, for all~$\ep > 0$,
 $$ P (G_{z, n}') = P (G_{z, n} \cap H_{z, n}^+ \cap H_{z, n}) \geq 1 - \ep/3 - \ep/3 - \ep/3 = 1 -\ep $$
 for all~$L$ larger than some~$L_+ = L_+ (\ep) < \infty$, which shows the first statement.
 The second statement is more difficult to prove.
 The scale parameter~$L$ being fixed, let~$\mu < 1$ such that
\begin{equation}
\label{eq:mu}
\mu^{L^3} \geq \frac{5}{6} \quad \hbox{and} \quad \frac{2 \mu}{5} \bigg(2 - \frac{2 \mu}{5} \bigg) \geq \frac{3}{5}.
\end{equation}
 The goal is to prove that, for each path~$(x, nT) \uparrow (y, (n + 1) T)$ in~\eqref{eq:good},
\begin{equation}
\label{eq:good-5}
  G_{z, n}' \ \ \hbox{and} \ \ \xi_{nT} (x) \geq 1/2 \quad \Longrightarrow \quad \xi_{(n + 1) T} (y) \geq 1/2.
\end{equation}
 Using the same notation as in~\eqref{eq:path} and Figure~\ref{fig:overlap} to define the path, because the process is attractive, it suffices to prove~\eqref{eq:good-5} under the assumption that site~$x_i$ is ignorant right before it interacts with site~$x_{i - 1}$ at time~$s_i$.
 On the good event~$G_{z, n}'$, the path has length~$K < L^3$ and contains at least one double interaction.
 Assume that the first double interaction occurs during the~$i$th overlap at times~$s_i$ and at time~$t_i$ for some~$i < L^3$.
 To fix the idea, assume also without loss of generality that~$s_i < t_i$ like in Figure~\ref{fig:overlap}.
 Then, along the path,
\begin{enumerate}
 \item the knowledge remains above~2/5 from time~$nT$ to time~$s_i$, \vspace*{4pt}
 \item the knowledge increases at time~$t_i$ and jumps to at least~3/5, \vspace*{4pt}
 \item the knowledge remains above~1/2 from time~$t_i$ to time~$(n + 1) T$.
\end{enumerate}
 Indeed, using the first inequality in~\eqref{eq:mu}, we get
 $$ \begin{array}{rcl}
    \xi_{s_i} (x_i) & \n \geq \n & \mu \xi_{s_{i - 1}} (x_{i - 1}) \geq \mu^2 \xi_{s_{i - 2}} (x_{i - 2}) \geq \cdots \vspace*{4pt} \\
                    & \n \geq \n & \mu^i \xi_{s_0} (x_0) = \mu^i \xi_{nT} (x) \geq \mu^{L^3} \xi_{nT} (x) \geq \mu^{L^3} / 2 \geq 5/12 > 2/5, \end{array} $$
 which proves the first item.
 Using~\eqref{eq:mu} and~$\xi_{s_i} (x_i) \geq \mu \xi_{s_i} (x_{i - 1}) \geq 2 \mu / 5$, we get
 $$ \begin{array}{rcl}
    \xi_{t_i} (x_i) & \n \geq \n & \xi_{s_i} (x_i) + \mu \xi_{s_i} (x_{i - 1}) (1 - \xi_{s_i} (x_i)) \vspace*{4pt} \\
                       & \n = \n & \mu \xi_{s_i} (x_{i - 1}) + \xi_{s_i} (x_i) (1 - \mu \xi_{s_i} (x_{i - 1})) \vspace*{4pt} \\
                    & \n \geq \n & \mu \xi_{s_i} (x_{i - 1}) + \mu \xi_{s_i} (x_{i - 1}) (1 - \mu \xi_{s_i} (x_{i - 1})) \vspace*{4pt} \\
                       & \n = \n & \mu \xi_{s_i} (x_{i - 1}) (2 - \mu \xi_{s_i} (x_{i - 1})) \geq (2 \mu / 5)(2 - (2 \mu / 5)) \geq 3/5, \end{array} $$
 which proves the second item.
 Finally, using again~\eqref{eq:mu}, we get
 $$ \begin{array}{rcl}
    \xi_{(n + 1) T} (y) & \n \geq \n & \xi_{s_K} (x_k) \geq \mu \xi_{s_{K - 1}} (x_{K - 1}) \geq \mu^2 \xi_{s_{K - 2}} (x_{K - 2}) \geq \cdots \vspace*{4pt} \\
                        & \n \geq \n & \mu^{K - i} \xi_{t_i} (x_i) \geq \mu^{L^3} \xi_{t_i} (x_i) \geq 3 \mu^{L^3} / 5 \geq 15/30 = 1/2, \end{array} $$
 which proves the third item, and so~\eqref{eq:good-5} and the second statement in~\eqref{eq:good-4}.
 As for Theorem~\ref{th:lambda-large}, it follows from~\eqref{eq:good-4} and~\cite[Theorem~A.4]{durrett_1995} that the set of good sites dominates the set of wet sites in an oriented site percolation process with parameter~$1 - \ep$ and a finite range of dependence, which implies that, with positive probability, the process~$\xi_t$ converges to an invariant measure that has a positive density of sites with knowledge at least one-half.
\end{proofof}



\begin{thebibliography}{100}

\bibitem{bezuidenhout_grimmett_1990}
 C. Bezuidenhout and G. Grimmett. The critical contact process dies out. \emph{Ann. Probab.}, 18(4):1462--1482, 1990.

\bibitem{bramson_durrett_1988}
 M. Bramson and R. Durrett. A simple proof of the stability criterion of Gray and Griffeath. \emph{Probab. Theory Related Fields}, 80(2):293--298, 1988.

\bibitem{durrett_1995}
 R. Durrett. Ten lectures on particle systems. In \emph{Lectures on probability theory (Saint-Flour, 1993)}, volume 1608 of \emph{Lecture Notes in Math.}, pages 97–201. Springer, Berlin, 1995.

\bibitem{durrett_neuhauser_1997}
 R. Durrett and C. Neuhauser. Coexistence results for some competition models. \emph{Ann. Appl. Probab.}, 7(1):10--45, 1997.

\bibitem{harris_1974}
 T. E. Harris. Contact interactions on a lattice. \emph{Ann. Probab.}, 2:969--988, 1974.

\bibitem{harris_1978}
 T. E. Harris. Additive set-valued {M}arkov processes and graphical methods. \emph{Ann. Probab.}, 6(3):355--378, 1978.


\bibitem{lanchier_2024}
 N. Lanchier. \emph{Stochastic interacting systems in life and social sciences}. De Gruyter Series in Probability and Stochastics, 2024. xviii + 468 pp.

\bibitem{liggett_1985}
 T. M. Liggett. \emph{Interacting particle systems, volume 276 of Grundlehren der mathematischen Wissenschaften [Fundamental Principles of Mathematical Sciences]}. Springer-Verlag, New York, 1985.

\bibitem{liggett_1999}
 T. M. Liggett. \emph{Stochastic interacting systems: contact, voter and exclusion processes, volume 324 of Grundlehren der mathematischen Wissenschaften [Fundamental Principles of Mathematical Sciences]}. Springer-Verlag, Berlin, 1999.
\end{thebibliography}
\end{document}